\newenvironment{resume}[1]{
	\begin{list}{}{
		\setlength{\leftmargin}{1cm}
		\setlength{\rightmargin}{1cm}
	}\item[]
	{\bf #1}
	}{\end{list}}
\theoremstyle{plain}
\newtheorem{df}{Definition}
\newtheorem{thm}{Theorem}
\newtheorem{lem}{Lemma}
\newtheorem{rmq}{Remark}
\newtheorem*{thm*}{Theorem}
\newcommand{\mot}{}
\newtheorem*{thmref_interne}{\mot{}}
\newenvironment{thmref}[2]{
	\renewcommand{\mot}{#1 #2}
	\begin{thmref_interne}}
	{\end{thmref_interne}
}
\newcommand\mb{\mathbb}
\newcommand\mc{\mathcal}
\newcommand\mr{\mathrm}
\newcommand\ms{\mathscr}
\newcommand\F{\mc{F}}
\newcommand\G{\mc{G}}
\newcommand\C[1]{\mb{C}^{#1}}
\newcommand\Df[1]{\mr{Diff}(\C{#1},0)}
\newcommand\Cg[1]{(\C{#1},0)}
\newcommand\Dfhat{\widehat{\mr{Diff}}(\mb{C},0)}
\title{
Formal classification of two-dimensional neighborhoods of genus $g\geq 2$ curves with trivial normal bundle
}
\author{Olivier Thom\footnote{The author is supported by CNRS, ANR-16-CE40-0008 Foliage.}}
\date{}
\begin{document}
\maketitle

\begin{resume}{Abstract:}
In this paper we study the formal classification of two-dimensional neighborhoods of genus $g\geq 2$ curves with trivial normal bundle.
We first construct formal foliations on such neighborhoods with holonomy vanishing along many loops, then give the formal/analytic classification of neighborhoods equipped with two foliations, and finally put this together to obtain a description of the space of neighborhoods up to formal equivalence.
\end{resume}

\section{Introduction}

\subsection{General setting}

Let $C$ be a complex curve of genus $g$.
We are interested in the different $2$-dimensional neighborhoods $S$ of $C$.
More precisely, two surfaces $S,S'$ equipped with embeddings $C\hookrightarrow S$, $C\hookrightarrow S'$ define formally/analytically equivalent neighborhoods if there exists neighborhoods $U,U'$ of $C$ in $S$ and $S'$ and a formal/analytic diffeomorphism $\varphi: U \rightarrow U'$ inducing the identity on $C$.
The equivalence of two neighborhoods is thus given by diagrams

\begin{center}
\begin{tikzpicture}
\draw (0,0) node[left] {$C$};
\draw [right hook-latex] (0,0) -- (2,0);
\draw (2,0) node[right] {$U\subset S$};
\draw [->] (-0.3,-0.3) -- (-0.3,-1.3);
\draw (0,-1.5) node[left] {$C$};
\draw [right hook-latex] (0,-1.5) -- (2,-1.5);
\draw (2,-1.5) node[right] {$U'\subset S'$};
\draw [->] (2.2,-0.3) -- (2.2,-1.3);
\draw (0.1,-0.8) node[] {$id$};
\draw (2.2,-0.8) node[right] {$\varphi$};
\end{tikzpicture}
\end{center}

We want to understand the classification of such neighborhoods up to equivalence.

The first invariants in this problem are the normal bundle $N_C$ of $C$ in $S$ and the self-intersection $C\cdot C = \mr{deg}(N_C)$ of the curve $C$.
If $C\cdot C<0$, Grauert's theorem (cf \cite{grauert} or \cite{cam}) tells that if the self-intersection is sufficiently negative (more precisely, if $C\cdot C< 2(2-2g)$), then $S$ is analytically equivalent to $N_C$ (ie. a neighborhood of $C$ in $S$ is analytically equivalent to a neighborhood of the zero section in the total space of $N_C$).

In the case $C\cdot C>0$, we can cite the works of Ilyashenko \cite{ilyashenko_imbedding_elliptic} on strictly positive neighborhood of elliptic curves and of Mishustin \cite{mishustin} for neighborhoods of genus $g\geq 2$ curves with large self-intersection ($C\cdot C > 2g-2$).
In both cases, the authors show that there is a huge family of non-equivalent neighborhoods (there are some functional invariants).

In the case $C\cdot C = 0$, the neighborhoods of elliptic curves have already been studied.
Arnol'd showed in \cite{arnold_bifurcations} that if $S$ is a neighborhood of an elliptic curve whose normal bundle $N_C$ is not torsion, $S$ is formally equivalent to $N_C$; if moreover $N_C$ satisfies some diophantine condition, then $S$ is analytically equivalent to $N_C$.
The case when $C$ is an elliptic curve and $N_C$ is torsion was studied in \cite{ltt}; in particular, it is shown that the formal moduli space (ie. with respect to formal classification) of such neighborhoods is a countable union of finite dimensional spaces.

The goal of this paper is to study the neighborhoods of genus $g\geq 2$ curves with trivial normal bundle under formal equivalence.

\subsection{Notations}

Throughout this paper, we will use the term $\Df{}$ to denote the group of germs of analytic diffeomorphisms of $\mb{C}$ at $0$; we will write $\widehat{\mr{Diff}}(\mb{C},0)$ the group of formal diffeomorphisms of $\mb{C}$ at $0$.

A \emph{formal neighborhood} $\widehat{S}$ of $C$ is a scheme $\ms{X}=(X,\widehat{\mc{O}})$ with $C$ as a subscheme such that there is an open covering $X=\cup U_i$ of $X$ with $\widehat{\mc{O}}\vert_{U_i} = (\mc{O}_C\vert_{U_i})[[y_i]]$, some coordinates $x_i$ on $C\cap U_i$ and some holomorphic functions $u_{ji}^{(k)}$ with $y_j = \sum_{k\geq 1} u_{ji}^{(k)}(x_i) y_i^k$ and $u_{ji}^{(1)}$ not vanishing on $U_i\cap U_j$.

If $S$ is an analytic neighborhood of $C$, then the completion $\widehat{\mc{O}}$ of $\mc{O}_S$ along $C$ is the structure sheaf of a formal neighborhood $\widehat{S}$ of $C$.
The natural inclusion $\mc{O}_S \hookrightarrow \widehat{\mc{O}}$ gives an injection $\widehat{S}\hookrightarrow S$ and allows us to see $S$ as a formal neighborhood.
We say that two analytic neighborhoods $S,S'$ are formally equivalent if $\widehat{S}$ and $\widehat{S'}$ are equivalent.

Let $S=\cup U_i$ be a covering of an analytic neighborhood $S$ and $(u_i,v_i)$ some analytic coordinates on $U_i$ with $C\cap U_i = \{v_i=0\}$.
A \emph{regular analytic foliation} on $S$ having $C$ as a leaf can be seen as a collection of submersive analytic functions $y_i=\sum_{k\geq 1}{y_i^{(k)}(u_i)v_i^k}$ on each $U_i$ such that there exist some diffeomorphisms $\varphi_{ji}\in\Df{}$ with $y_j = \varphi_{ji}\circ y_i$ on $U_i\cap U_j$.
In analogy, a \emph{regular formal foliation} on $S$ around $C$ (or on a formal neighborhood $\widehat{S}$ of $C$) is a collection of formal power series $y_i = \sum_{k\geq 1}{y_i^{(k)}(u_i)v_i^k}$ with $y_j=\varphi_{ji}\circ y_i$ for some $\varphi_{ji}\in \widehat{\mr{Diff}}(\mb{C},0)$ where the coefficients $y_i^{(k)}(u_i)$ are still analytic functions on $C\cap U_i$ and $y_i^{(1)}$ does not vanish on $C\cap U_i$ (otherwise stated, the divisor $\{y_i=0\}$ is equal to $\{v_i=0\}=C\cap U_i$).

\subsection{Results}

We will use the same strategy as in \cite{ltt}: first construct two "canonical" regular formal foliations $\F$, $\G$ on $S$ having $C$ as a leaf, then study the classification of formal/convergent bifoliated neighborhoods $(S,\F,\G)$, and finally put these together to obtain the formal classification of neighborhoods.

The first step, the construction of "canonical" foliations, is explained in section \ref{sec_construction}.
It has already been proved in \cite{clpt} that there exist formal regular foliations in $S$ having $C$ as a leaf.
Since we need to have some kind of unicity to be able to use these for the classification of neighborhoods, we will need to adapt the construction of \cite{clpt}.
The idea is to construct foliations whose holonomy is trivial along as many loops as possible.
For this, we fix a family $(\alpha_1,\ldots,\alpha_g,\beta_1,\ldots,\beta_g)$ of loops in $C$ which is a symplectic basis in homology and denote $A$-loops the loops $\alpha_i$ and $B$-loops the $\beta_i$.
We prove the following: 

\begin{thm}
\label{thm_constr_fol}
Let $C$ be a curve of genus $g\geq 2$ and $S$ a neighborhood of $C$ with trivial normal bundle.
Then there exists a unique regular formal foliation $\mc{F}$ on $S$ having $C$ as a leaf, such that the holonomy of $\mc{F}$ along $A$-loops is trivial.
\end{thm}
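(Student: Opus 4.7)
The plan is to construct $\F$ by induction on the order of jets, determining the $(n+1)$st order correction from the $n$th order data together with the $A$-triviality condition. Since $N_C$ is trivial, I would fix an atlas $(U_i,(x_i,y_i))$ of $S$ with $C=\{y_i=0\}$ and transitions $y_j=y_i+O(y_i^2)$, and call an \emph{$n$-foliation} a regular formal foliation defined modulo $y^{n+1}$, i.e.\ a compatible system of transition diffeomorphisms $\varphi_{ji}^{(n)}$ tangent to the identity, truncated at this order. At order $n=1$ the only choice is $\varphi_{ji}^{(1)}=\mr{id}$, and the $A$-holonomies are automatically trivial. Assuming inductively that a unique $n$-foliation $\F^{(n)}$ with trivial $A$-holonomy modulo $y^{n+1}$ has been built, I would show that it admits a unique extension to an $(n+1)$-foliation with the same property.

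\textbf{Space of extensions and holonomy map.} Extensions of $\F^{(n)}$ to order $n+1$ correspond to choices of correction $\varphi_{ji}^{(n+1)}=\varphi_{ji}^{(n)}+c_{ji}(x_i)y^{n+1}\bmod y^{n+2}$; the cocycle condition for the $\varphi_{ji}^{(n+1)}$ reduces at this order, using $(\varphi_{ji}^{(n)})'(0)=1$, to the Čech relation $c_{ki}=c_{kj}+c_{ji}$, so $(c_{ji})\in Z^1(\{U_i\},\mc{O}_C)$. Two such extensions are equivalent iff the cocycles differ by a coboundary (absorbed by a gauge of the form $y_i\mapsto y_i+r_i(x_i)y_i^{n+1}$), hence the set of extensions modulo equivalence is an affine space over $H^1(C,\mc{O}_C)\cong\mb{C}^g$. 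For each $A$-loop $\alpha_k$, composing the $\varphi_{ji}^{(n+1)}$ along a sequence of charts covering $\alpha_k$ produces $h_{\alpha_k}(y)=y+\bigl(\sum_\ell c_{i_{\ell+1}i_\ell}\bigr)y^{n+1}\bmod y^{n+2}$, so the order $n+1$ part of the holonomy is exactly the Čech pairing of $(c_{ji})$ with $\alpha_k$, giving an affine map $\Phi\colon\{\text{extensions}\}\to\mb{C}^g$.

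\textbf{Main step and obstacle.} The linear part $\bar\Phi\colon H^1(C,\mc{O}_C)\to\mb{C}^g$ is the $A$-period map: via the Dolbeault isomorphism $H^1(C,\mc{O}_C)\cong H^{0,1}(C)$, the Čech sum along $\alpha_k$ becomes $\int_{\alpha_k}\eta$ for a $(0,1)$-representative $\eta$ of the class. The classical fact that the $A$-period map restricted to $H^{0,1}(C)$ is an isomorphism onto $\mb{C}^g$ (a reformulation of the non-degeneracy of the Riemann period matrix for the symplectic basis $(\alpha_i,\beta_i)$) then implies that $\bar\Phi$ is an isomorphism, so $\Phi$ is an affine bijection and there is a unique extension killing the $A$-holonomy at order $n+1$. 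Passing to the limit produces the desired $\F$, and uniqueness at each induction step yields global uniqueness. The main technical hurdle is the explicit identification of the Čech holonomy computation with a de Rham period integral; modulo this translation, the theorem reduces to linear algebra and a classical Hodge-theoretic fact about symplectic bases.
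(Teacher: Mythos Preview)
Your inductive strategy is the same as the paper's, and the endgame—an $A$-period isomorphism forcing a unique extension at each step—is correct. But the parametrization in your middle paragraph is off in a way that matters.

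You write the order-$(n+1)$ correction as $\varphi_{ji}^{(n+1)}=\varphi_{ji}^{(n)}+c_{ji}(x_i)y^{n+1}$ with $c_{ji}$ a \emph{function} of $x_i$, and you gauge by $y_i\mapsto y_i+r_i(x_i)y_i^{n+1}$ with $r_i$ functions, landing in $H^1(C,\mc{O}_C)$. Neither move is compatible with the definition of a foliation: for $(y_i)$ to be first integrals one needs $y_j=\varphi_{ji}(y_i)$ with $\varphi_{ji}\in\Dfhat$, i.e.\ the coefficients $c_{ji}$ must be \emph{constants}; and a gauge $y_i\mapsto y_i+r_i(x)y_i^{n+1}$ with non-constant $r_i$ changes the foliation, so only constant $r_i$ may be quotiented out. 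With these corrections your quotient is not $H^1(\mc{O}_C)$. (There is also a hidden step: the $c_{ji}$ do not form an additive cocycle until one first extends $(\varphi_{ji}^{(n)})$ to an honest $\mathrm{Diff}^1_{n+1}$-cocycle $(\psi_{ji})$; only then is the residual error $y_j-\psi_{ji}(y_i)=a'_{ji}y_i^{n+1}$ a cocycle in $Z^1(\mc{O}_C)$.)

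The paper's bookkeeping makes this precise via the exact sequence $0\to H^0(C,\Omega^1)\xrightarrow{\delta} H^1(C,\mb{C})\to H^1(C,\mc{O}_C)\to 0$. Abstract extensions of the $\mathrm{Diff}^1_n$-cocycle form a torsor over $H^1(C,\mb{C})$; the ones actually realized by first integrals on the fixed neighborhood $S$ are exactly those whose constant cocycle $(c_{ji})$ has the same image in $H^1(C,\mc{O}_C)$ as the error class $[a'_{ji}]$ determined by $S$. Hence the space of $(n+1)$-foliations on $S$ extending $\F^{(n)}$ is a torsor over $\ker\bigl(H^1(\mb{C})\to H^1(\mc{O}_C)\bigr)=\delta\bigl(H^0(\Omega^1)\bigr)$, and the linearized $A$-holonomy map is $P_A\circ\delta\colon H^0(\Omega^1)\to\mb{C}^g$, the $A$-period map on \emph{holomorphic} $1$-forms. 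That this is an isomorphism is the classical fact you invoke—but on $H^{1,0}$, not $H^{0,1}$. Your conclusion survives, and the Hodge-theoretic fact you cite is essentially conjugate to the one actually needed; what has to change is the identification of which cohomology group parametrizes extensions and which period map is in play.
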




\begin{figure}[H]
\begin{center}
\begin{tikzpicture}
\draw[line width=0.4mm] (-4.2,0) -- (4.2,0);
\draw (4.2,0) node[right] {$C$};

\draw plot [domain=-4:4,smooth] (\x,{(\x+9)*(\x+9)/81});
\draw plot [domain=-4:4,smooth] (\x,{(\x+9)*(\x+9)/(2*81)});
\draw plot [domain=-4:4,smooth] (\x,{(\x+9)*(\x+9)/(4*81)});
\draw plot [domain=-4:0.5,smooth] (\x,{(\x+9)*(\x+9)/40});

\draw plot [domain=-4:4,smooth] (\x,{-(\x+9)*(\x+9)/81});
\draw plot [domain=-4:4,smooth] (\x,{-(\x+9)*(\x+9)/(2*81)});
\draw plot [domain=-4:4,smooth] (\x,{-(\x+9)*(\x+9)/(4*81)});
\draw plot [domain=-4:0.5,smooth] (\x,{-(\x+9)*(\x+9)/40});

\draw plot [domain=-4:4,smooth] (-\x,{(\x+9.5)*(\x+9.5)/81});
\draw plot [domain=-4:4,smooth] (-\x,{(\x+9.5)*(\x+9.5)/(2*81)});
\draw plot [domain=-4:4,smooth] (-\x,{(\x+9.5)*(\x+9.5)/(4*81)});
\draw plot [domain=-4:0,smooth] (-\x,{(\x+9.5)*(\x+9.5)/40});

\draw plot [domain=-4:4,smooth] (-\x,{-(\x+9.5)*(\x+9.5)/81});
\draw plot [domain=-4:4,smooth] (-\x,{-(\x+9.5)*(\x+9.5)/(2*81)});
\draw plot [domain=-4:4,smooth] (-\x,{-(\x+9.5)*(\x+9.5)/(4*81)});
\draw plot [domain=-4:0,smooth] (-\x,{-(\x+9.5)*(\x+9.5)/40});
\end{tikzpicture}
\end{center}
\caption{A bifoliated neighborhood of $C$}
\end{figure}
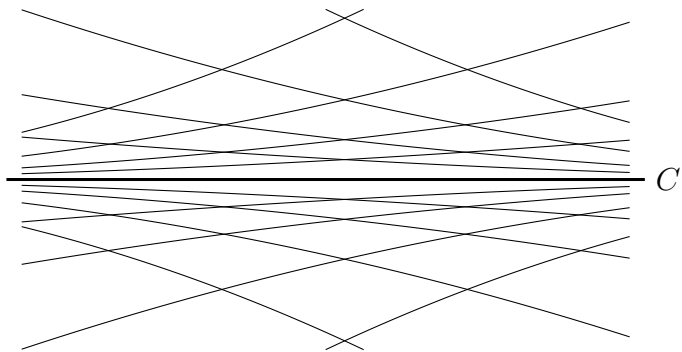

The second step, the classification of bifoliated neighborhoods, can be found in \cite{thom_these}.
We will explain in section \ref{sec_bif_classification} how this classification works in the generic case and show that a bifoliated neighborhood $(S,\F,\G)$ is characterised by the order of tangency $k$ between $\F$ and $\G$ along $C$, a $1$-form $\omega$ which controls how $\F$ and $\G$ differ at order $k+1$ and an additionnal invariant
\[
Inv(S,\F,\G)\in (\mr{Diff}(\mb{C},0))^{6g-3}/\sim
\]
(resp. $\widehat{Inv}(S,\F,\G)\in (\Dfhat)^{6g-3}/\sim$ for formal neighborhoods), where the relation $\sim$ is given by the action of $\mr{Diff}(\mb{C},0))$ on $(\Df{})^{6g-3}$ by conjugacy on each factor (resp. the action of $\Dfhat$ on $(\Dfhat)^{6g-3}$ by conjugacy on each factor).
This invariant is given by holonomies of the foliations $\F$ and $\G$ computed on a tangency curve $T_1$, ie. an irreducible component different from $C$ of the set of points at which $\F$ and $\G$ are tangent.

\begin{thmref}{Theorem}{\ref{thm_classif_bif}}
Let $C$ be a curve of genus $g\geq 2$. 
Let $(S,\F,\G)$ and $(S',\F',\G')$ be two bifoliated neighborhoods of $C$ with 
same tangency order $k$ and $1$-form $\omega$.
Suppose $k\geq 1$ and that $\omega$ has simple zeroes $p_1,\ldots,p_{2g-2}$.
Denote $T_1, T'_1$ the tangency curves passing through $p_1$ and compute the invariants $Inv(S,\F,\G)$ and $Inv(S',\F',\G')$ on the tangency curves $T_1, T'_1$.

Then $(S,\F,\G)$ and $(S',\F',\G')$ are analytically (resp. formally) diffeomorphic if and only if
\[
Inv(S',\F',\G')=Inv(S,\F,\G)
\]
(resp. $\widehat{Inv}(S',\F',\G') = \widehat{Inv}(S,\F,\G)$).
\end{thmref}


Moreover, we know which invariants come from a bifoliated neighborhood: if $((\varphi_i^1)_{i=1}^{2g},(\varphi_i^2)_{i=1}^{2g},(\varphi_j^3)_{j=2}^{2g-2})$ is a representant of $Inv(S,\F,\G)$, then the $\varphi_r^s$ must be tangent to identity at order $k$.
Moreover, if we write $\varphi_r^s(t) = t+a_r^st^{k+1} \text{ (mod $t^{k+2}$)}$, then the periods of $\omega$ must be $(a_i^2-a_i^1)_{i=1,\ldots,2g}$ (equation \eqref{eq_compatibility_1} in the text); $a_j^3$ must be equal to $\int_{p_1}^{p_j}\omega$ for $j=2,\ldots,2g-2$ (equation \eqref{eq_compatibility_2}); and the $(\varphi_i^s)_{i=1}^{2g}$ must be representations of the fundamental group of $C$ for $s=1,2$, ie. $[\varphi_1^s,\varphi_{1+g}^s]\ldots[\varphi_g^s,\varphi_{2g}^{s}] = id$ (equation \eqref{eq_representation}).

\begin{thmref}{Theorem}{\ref{thm_constr_bif}}
Let $((\varphi_i^1)_{i=1}^{2g},(\varphi_i^2)_{i=1}^{2g},(\varphi_j^3)_{j=2}^{2g-2})$ be some analytic/formal diffeomorphisms; let $k$ be an integer and $\omega$ a $1$-form.
They define a bifoliated analytic/formal neighborhood $(S,\F,\G)$ with $\F$ and $\G$ tangent at order $k$ and with $1$-form $\omega$ if and only if every $\varphi_r^s$ is tangent to identity at order (at least) $k$ and if they satisfy the relations \eqref{eq_compatibility_1}, \eqref{eq_compatibility_2} and \eqref{eq_representation}.
\end{thmref}

If the diffeomorphisms $\varphi_r^s$ are only formal, then the neighborhood is a priori only a formal neighborhood of $C$.
Note here that the relations \eqref{eq_compatibility_1} and \eqref{eq_compatibility_2} are in fact relations between jets of order $k+1$ of the $\varphi_r^s$, so the set of bifoliated neighborhoods modulo equivalence has huge dimension.
Indeed, the space of pairs of diffeomorphisms modulo common conjugacy is already infinite dimensional, even formally: if we fix one diffeomorphism $\varphi_1\neq id$ tangent to the identity, then the centralizer of $\varphi_1$ has dimension $1$ so that the set of pairs $(\varphi_1,\varphi_2)$ modulo common conjugacy has roughly speaking the same cardinality as the set of diffeomorphisms.

Finally, the last step (the formal classification of neighborhoods) is done in section \ref{sec_formal_classification}.
For the pair of canonical foliations constructed, the tangency order $k$ will be the Ueda index of the neighborhood (introduced by Ueda in \cite{ueda} and named by Neeman in \cite{neeman_ueda}), ie. the highest order such that there is a tangential fibration on $Spec(\mc{O}_S/I^{k+1})$ where $I$ is the ideal sheaf of $C$ in $S$.
Similarly, $\omega$ can be interpreted in term of the Ueda class of $S$.
We will define the space $\ms{V}(C,k,\omega)$ 
of neighborhoods with trivial normal bundle, fixed Ueda index equal to $k$ and fixed Ueda class given by $\omega$ in order to state the final theorem:

\begin{thmref}{Theorem}{\ref{thm_formal_classification}}
Let $C$ be a curve of genus $g\geq 2$, $1\leq k<\infty$ and $\omega$ a $1$-form on $C$ with simple zeroes.
Then there is an injective map 
\[
\Phi : \ms{V}(C,k,\omega) \hookrightarrow \Dfhat{}^g\times\Dfhat{}^g\times\Dfhat{}^{2g-3}/\sim
\]
where the equivalence relation $\sim$ is given by the action of $\Dfhat{}$ on $\Dfhat{}^N$ by conjugacy on each factor.

A tuple of diffeomorphisms $((\varphi_{i}^{(j)})_i)_{j=1}^3$ is in the image of $\Phi$ if and only if the $\varphi_{i}^{(j)}$ are tangent to the identity at order $k$ and if they satisfy the compatibility conditions \eqref{eq_compatibility_1} and \eqref{eq_compatibility_2}.
\end{thmref}

\section{Construction of foliations}
\label{sec_construction}

On the curve $C$ we can choose loops $\alpha_i,\beta_i$, $i=1,\ldots,g$ forming a symplectic basis of $H_1(C,\mb{C})$, ie. $\alpha_i\cdot \alpha_j = \beta_i\cdot \beta_j=0$ and $\alpha_i\cdot \beta_j = 1$ if $i=j$ and $0$ otherwise.
We call $A$-loops the loops $\alpha_i$ and $B$-loops the $\beta_i$.
Similarly, if $\omega$ is a $1$-form on $C$, we will call $A$-period (resp. $B$-period) of $\omega$ any integral $\int_{\alpha_i} \omega$ (resp. $\int_{\beta_i} \omega$).
\begin{figure}[H]
\includegraphics{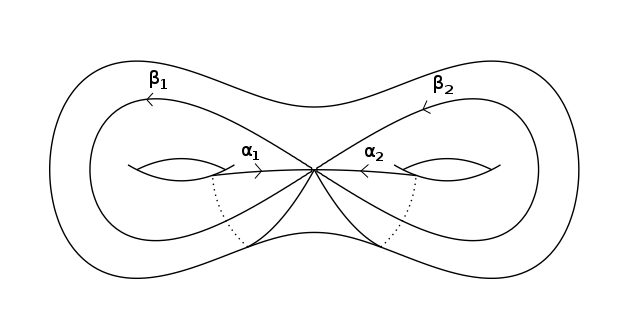}
\caption{$A$- and $B$-loops}
\end{figure}

\begin{df}
A foliation will be called $A$-canonical if its holonomy representation $\rho$ satisfies $\rho(\alpha_i) = id$ for all $i=1,\ldots,g$ and if the linear part of $\rho$ is trivial.
We define the notion of $B$-canonicity similarly; unless otherwise stated, the term "canonical" will mean $A$-canonical.
\end{df}

Choose an open covering $(U_i)$ of some neighborhood of $C$; let $V_i = U_i\cap C$ and $\mc{V}=(V_i)$ the associated open covering of $C$.
Denote by $\mb{C}$ the trivial rank one local system on $C$ and by $\mc{O}_C$ the trivial line bundle on $C$.

First, let us give the following definitions:

\begin{df}
Let $(a_{ij})$ be a cocycle in $Z^1(\mc{V},\mb{C})$ and let $\gamma$ be a loop on $C$.
We define the period of $(a_{ij})$ along $\gamma$ to be the sum
\[
\int_\gamma (a_{ij}) = \sum_{p=1}^n a_{i_p i_{p+1}}
\]
where the open sets $(V_{i_p})_{p=1}^n$ form a simple covering of $\gamma$ and $V_{i_p}\cap V_{i_{p+1}}\cap \gamma\neq \emptyset$.
\end{df}

This application only depends on the class $[\gamma]$ of $\gamma$ in the fundamental group of $C$; taking periods along the $\alpha_i$ and $\beta_i$ gives applications
\[
P_A,P_B : Z^1(\mc{V},\mb{C}) \rightarrow \mb{C}^g.
\]
Putting these together gives an application $P: Z^1(\mc{V},\mb{C}) \rightarrow \mb{C}^{2g}$ which induces an injection $P : H^1(\mc{V},\mb{C}) \rightarrow \mb{C}^{2g}$.

On the other hand, the exact sequence
\[
0 \rightarrow \mb{C} \rightarrow \mc{O}_C \rightarrow \Omega^1 \rightarrow 0
\]
gives the exact sequence in cohomology 
\begin{equation}
\label{eq_exact_sequence}
0 \rightarrow H^0(C,\Omega^1) \overset{\delta}{\rightarrow} H^1(C,\mb{C}) \rightarrow H^1(C,\mc{O}_C) \rightarrow 0.
\end{equation}
The fact that the arrow $H^1(C,\mb{C}) \rightarrow H^1(C,\mc{O}_C)$ is surjective is an easy consequence of (\cite{ueda}, proposition 1).
We have $\mr{dim}(H^0(C,\Omega^1)) = \mr{dim}(H^1(C,\mc{O}_C)) = g$ and $\mr{dim}(H^1(C,\mb{C}))=2g$ so that $P : H^1(C,\mb{C}) \rightarrow \mb{C}^{2g}$, being injective, is bijective.
It is well-known that a $1$-form whose $A$-periods vanish is zero, so that the application $P_A\circ \delta : H^0(C,\Omega^1) \rightarrow \mb{C}^g$ is a bijection.


Constructing a foliation on $S$ is equivalent to constructing functions $y_i$ on $U_i$ which are reduced equations of $C\cap U_i$ such that
\[
y_j = \varphi_{ji}(y_i),
\]
where the $\varphi_{ji}$ are diffeomorphisms of $(\mb{C},0)$.
As before, if $\gamma$ is a loop, we can define the product 
\[
H_{\gamma}((\varphi_{ji})) = \varphi_{i_1i_n}\circ \ldots\circ \varphi_{i_3i_2}\circ \varphi_{i_2i_1}
\]
which will be the holonomy of the foliation given by the $y_i$ along the loop $\gamma$.
To construct such functions, we are going to proceed by steps, but first, we need another definition.

\begin{df}
A set of functions $(y_i)$ on the open sets $U_i$ is called \emph{$A$-normalized at order $\mu$} if the $y_i$ are regular functions on $U_i$ vanishing at order $1$ on $C$ and 
\begin{equation}
\label{eq_mu_foliated}
y_j = \varphi_{ji}^{(\mu)}(y_i) + a_{ji}^{(\mu+1)}y_i^{\mu+1},
\end{equation}
on $U_i\cap U_j$,
where $a_{ji}^{(\mu+1)}$ is a function on $U_i\cap U_j$, the $\varphi_{ji}^{(\mu)}$ are polynomials of degree $\mu$ which are also diffeomorphisms tangent to identity and the holonomies $H_{\alpha_k}((\varphi_{ji}))$ are the identity modulo $y_i^{\mu+1}$ for all $k=1,\ldots,g$.
\end{df}


The idea of the proof is first to construct some functions $(y_i)$ which are $A$-normalized at order $1$, and then to show that every $A$-normalized at order $\mu$ set of functions $(y_i)$ can be transformed into an $A$-normalized at order $(\mu+1)$ set of functions by changes of coordinates $y_i \mapsto y_i-b_iy_i^{\mu+1}$ for some functions $b_i$ on $U_i$.
At the limit, we will thus obtain a formal foliation on $S$ with trivial holonomy along $A$-loops.

\begin{lem}
There exists an $A$-normalized at order $1$ set of functions and the foliations associated to two such sets of functions coincide at order $1$.
\end{lem}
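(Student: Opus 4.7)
The strategy is to reformulate both assertions in terms of the conormal bundle $N_C^*$: an $A$-normalized-at-order-$1$ system $(y_i)$ essentially encodes a global trivialization of $N_C^*$, since the compatibility $y_j = y_i + a_{ji}^{(2)} y_i^2$ forces $y_i$ and $y_j$ to have the same image in the quotient $I_C/I_C^2 \simeq N_C^*$. Existence then reduces to the triviality of $N_C^*$ (equivalent to the given triviality of $N_C$), while order-$1$ uniqueness reduces to the fact that $H^0(C, N_C^*) \simeq \mb{C}$, which follows from the compactness of $C$.

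For existence, I would pick local coordinates $(u_i, v_i)$ on $U_i$ with $C \cap U_i = \{v_i = 0\}$, so that $v_i$ is a local frame of $N_C^*$ and its transition cocycle $g_{ji}$ is read off from $v_j = g_{ji}(u_i) v_i + O(v_i^2)$. Triviality of $N_C$ yields nowhere vanishing $h_i \in \mc{O}^*(V_i)$ with $h_j g_{ji} = h_i$ on overlaps, and setting $y_i := h_i(u_i) v_i$ (or any local extension with the same linear part in $v_i$) gives $y_j - y_i = O(v_i^2)$ by direct computation. Since $y_i$ is a reduced equation of $C \cap U_i$, this difference automatically lies in the ideal $(y_i^2)$ and can be written as $a_{ji}^{(2)} y_i^2$. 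The only degree-$1$ polynomial tangent to identity being $\varphi_{ji}^{(1)}(t) = t$, all holonomies are forced to be the identity, so the $A$-holonomy condition modulo $y_i^2$ is trivially satisfied.

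For order-$1$ uniqueness, given two $A$-normalized-at-order-$1$ systems $(y_i)$ and $(y_i')$ with linear parts $h_i v_i$ and $h_i' v_i$, both $(h_i)$ and $(h_i')$ satisfy the trivialization relations $h_j g_{ji} = h_i$ and $h_j' g_{ji} = h_i'$. The quotient $h_i / h_i'$ therefore agrees with $h_j / h_j'$ on intersections, hence glues to a global nowhere vanishing holomorphic function on the compact curve $C$, which must be a constant $\lambda \in \mb{C}^*$. Thus $y_i \equiv \lambda y_i' \pmod{v_i^2}$ for every $i$, meaning the two systems differ at order $1$ only by the trivial target reparametrization $t \mapsto \lambda t$, and the associated foliations coincide on the first-order thickening of $C$. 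The argument has no serious obstacle; the only mildly delicate point is the passage from ``$y_j - y_i$ vanishes at order $2$ on $C$'' to ``$y_j - y_i \in (y_i^2)$'', which relies on $y_i$ being a reduced local generator of the ideal of $C$.
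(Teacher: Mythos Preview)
Your proof is correct and follows essentially the same route as the paper: both arguments exploit the triviality of $N_C$ to rescale arbitrary local equations of $C$ into a system with $y_j = y_i + O(y_i^2)$, and both deduce order-$1$ uniqueness from the fact that two global nowhere-vanishing sections of a trivial line bundle on a compact curve are proportional. Your presentation is slightly more explicit in noting that the $\varphi_{ji}^{(1)}$ are forced to be the identity (so the $A$-holonomy condition is vacuous at this step), but the underlying idea is the same.
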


\begin{proof}
Take any reduced equations $(y_i)$ of $C$ and compute $y_j$ in the coordinate $y_i$:
\[
y_j = a_{ji}^{(1)}y_i.
\]
The cocycle $(a_{ji}^{(1)}\vert_C)$ defines the normal bundle $N_C=\mc{O}_C$ of $C$ so is cohomologous to the trivial cocycle: there exist functions $b_i$ on $U_i$ such that
\[
a_{ji}^{(1)}\vert_C = \frac{b_j\vert_C }{b_i\vert_C }.
\]
Put $z_i = y_i/b_i$ to obtain 
\[
z_j = z_i + a_{ji}^{(2)} z_i^2.
\]
for some functions $a_{ji}^{(2)}$.

For unicity, consider two sets of functions $(y_i)$ and $(z_i)$ $A$-normalized at order $1$.
Then $(y_i)$ and $(z_i)$ define two sections $y^1$ and $z^1$ on the normal bundle $N_C$.
Necessarily, $y^1$ and $z^1$ are colinear, hence the result.
\end{proof}

%
%

\begin{lem}
\label{lem_step}
Let $(y_i)$ be a set of functions $A$-normalized at order $\mu$.
Then there exist functions $(b_i)$ on $U_i$ such that the coordinates $z_i = y_i - b_iy_i^{\mu+1}$ are $A$-normalized at order $\mu+1$.
Moreover, two sets of functions $A$-normalized at order $(\mu+1)$ which coincide at order $\mu$ define the same foliation at order $\mu+1$.
\end{lem}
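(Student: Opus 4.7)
The plan is to reduce the step from order $\mu$ to $\mu+1$ to a cohomological problem resolved by the exact sequence \eqref{eq_exact_sequence}.

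First I would substitute $z_i = y_i - b_i y_i^{\mu+1}$ into the normalization $y_j = \varphi_{ji}^{(\mu)}(y_i) + a_{ji}^{(\mu+1)} y_i^{\mu+1}$, invert, and Taylor-expand modulo $z_i^{\mu+2}$. Using that $\varphi_{ji}^{(\mu)}$ is tangent to the identity (so its Jacobian correction lands in $O(z_i^{\mu+2})$) and that only $b_i|_C$ enters at this order, one obtains
\[
z_j = \varphi_{ji}^{(\mu)}(z_i) + \bigl(c_{ji}^{(\mu+1)} + b_i|_C - b_j|_C\bigr) z_i^{\mu+1} + O(z_i^{\mu+2}),
\]
where $c_{ji}^{(\mu+1)} := a_{ji}^{(\mu+1)}|_C \in \mc{O}_C(V_i \cap V_j)$. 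Thus $(z_i)$ is $A$-normalized at order $\mu+1$ exactly when the modified cochain $\tilde{c}_{ji}^{(\mu+1)} := c_{ji}^{(\mu+1)} + b_i|_C - b_j|_C$ is a cocycle of constants --- so that $\varphi_{ji}^{(\mu+1)}(t) := \varphi_{ji}^{(\mu)}(t) + \tilde{c}_{ji}^{(\mu+1)} t^{\mu+1}$ is a polynomial --- and its $A$-periods take specific values (computable from the already-fixed $\varphi^{(\mu)}$'s by standard composition formulas) that kill the coefficient of $t^{\mu+1}$ in the holonomy around each $\alpha_k$.

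The composition identity $\varphi_{kj}^{(\mu)} \circ \varphi_{ji}^{(\mu)} \equiv \varphi_{ki}^{(\mu)} \pmod{y_i^{\mu+1}}$ on triple overlaps shows that $(c_{ji}^{(\mu+1)})$ defines a class $[c^{(\mu+1)}] \in H^1(C, \mc{O}_C)$, after absorbing a possible constant triple-overlap defect via $H^2(C, \mc{O}_C) = 0$; this cocycle verification is the main technical step. Using the surjection $H^1(C, \mb{C}) \twoheadrightarrow H^1(C, \mc{O}_C)$ of \eqref{eq_exact_sequence}, I lift $[c^{(\mu+1)}]$ to some $[\lambda] \in H^1(C, \mb{C})$; two lifts differ by $\delta(H^0(C, \Omega^1))$, and since $P_A \circ \delta : H^0(C, \Omega^1) \to \mb{C}^g$ is bijective, there is a unique lift whose $A$-periods equal the required values. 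Choosing any representative cocycle $(\lambda_{ji})$ of $[\lambda]$, the difference $(c_{ji}^{(\mu+1)} - \lambda_{ji})$ is trivial in $H^1(C, \mc{O}_C)$, so it equals $\delta(b|_C)$ for some $(b_i|_C) \in C^0(\mc{V}, \mc{O}_C)$; extending each $b_i|_C$ arbitrarily to a function $b_i$ on $U_i$ gives the required coordinate change, with $\tilde{c}_{ji}^{(\mu+1)} = \lambda_{ji}$ constant and of prescribed $A$-periods.

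For uniqueness, if $(y_i)$ and $(z_i)$ are both $A$-normalized at order $\mu+1$ and coincide at order $\mu$, then $z_i = y_i - b_i y_i^{\mu+1} + O(y_i^{\mu+2})$ and the difference of the two $(\mu+1)$-st coefficients $(b_i|_C - b_j|_C)$ is simultaneously a cocycle of constants (difference of two constant cocycles), has zero $A$-periods (difference of cocycles with identical prescribed values), and is a coboundary in $Z^1(\mc{V}, \mc{O}_C)$. Its class in $H^1(C, \mb{C})$ therefore lies in $\ker\bigl(H^1(C, \mb{C}) \to H^1(C, \mc{O}_C)\bigr) = \delta(H^0(C, \Omega^1))$, and by injectivity of $P_A \circ \delta$ this class vanishes. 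Hence each $b_i|_C$ is locally constant (up to a global additive shift), the coordinate change $y_i \to z_i$ only reparametrizes the transverse direction, and the two normalizations define the same foliation at order $\mu+1$.
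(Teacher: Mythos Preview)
Your overall strategy---substitute, reduce the $(\mu+1)$-st coefficient to a cohomological problem, and resolve it via the exact sequence \eqref{eq_exact_sequence}---matches the paper's. But there is a real gap at the step you yourself flag as ``the main technical step'': the handling of the triple-overlap defect.

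Writing $c_{ji}:=a_{ji}^{(\mu+1)}|_C$, the triple-overlap computation gives
\[
c_{ki}-c_{kj}-c_{ji}=d_{kji},\qquad d_{kji}:=\text{coeff.\ of }y^{\mu+1}\text{ in }\varphi^{(\mu)}_{kj}\!\circ\varphi^{(\mu)}_{ji}-\varphi^{(\mu)}_{ki},
\]
so $(d_{kji})\in Z^2(\mc{V},\mb{C})$. You need $\tilde c_{ji}=c_{ji}+b_i-b_j$ to be \emph{constant}; taking $\delta$ shows this forces $(d_{kji})\in B^2(\mc{V},\mb{C})$, i.e.\ $[(d_{kji})]=0$ in $H^2(C,\mb{C})\cong\mb{C}$. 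Invoking $H^2(C,\mc{O}_C)=0$ does not give this: the map $H^2(C,\mb{C})\to H^2(C,\mc{O}_C)$ is the zero map between $\mb{C}$ and $0$, so triviality in $\mc{O}_C$ says nothing. Concretely, if you absorb the defect by some $(e_{ji})\in C^1(\mc{V},\mc{O}_C)$, the final coefficient becomes $\lambda_{ji}+e_{ji}$, which is not constant unless $(d_{kji})$ was already a $\mb{C}$-coboundary.

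The paper closes this gap by a different first move: before touching $(a_{ji}^{(\mu+1)})$, it \emph{lifts the cocycle} $(\varphi^{(\mu)}_{ji})\in H^1(C,\mr{Diff}^1_\mu)$ to a genuine cocycle $(\psi_{ji})\in H^1(C,\mr{Diff}^1_{\mu+1})$. The central extension $0\to\mb{C}\to\mr{Diff}^1_{\mu+1}\to\mr{Diff}^1_\mu\to 0$ has lifting obstruction in $H^2(\pi_1(C),\mb{C})$, computed as $\prod_k[\tilde\rho(\alpha_k),\tilde\rho(\beta_k)]$ for any lifts $\tilde\rho$ of the generators. Here the $A$-normalization hypothesis $H_{\alpha_k}(\varphi^{(\mu)})=\mathrm{id}$ is essential: lifting $\rho(\alpha_k)=\mathrm{id}$ to $\tilde\rho(\alpha_k)=\mathrm{id}$ makes every commutator trivial, so the obstruction vanishes. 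Once $(\psi_{ji})$ is a cocycle, the new cochain $({a'_{ji}}^{(\mu+1)}|_C)$ is a genuine $\mc{O}_C$-cocycle with no defect, and the target $A$-periods are simply zero (your ``specific values'' become $0$). The rest of your argument, including the uniqueness part, then goes through exactly as you wrote.
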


\begin{proof}
Since $(y_i)$ is $A$-normalized at order $\mu$, it satisfies 
\[
y_j = \varphi_{ji}^{(\mu)}(y_i) + a_{ji}^{(\mu+1)} y_i^{\mu+1}.
\]
In the following, denote by $\mr{Diff}^{1}_{\mu}(\mb{C},0) = \mr{Diff}^1(\mb{C},0) / \mr{Diff}^{\mu+1}(\mb{C},0)$ the group of $\mu$-jets of diffeomorphisms tangent to the identity.
The tuple $(\varphi_{ji}^{(\mu)})_{ji}$ is a cocycle in $H^1(C,\mr{Diff}^{1}_{\mu}(\mb{C},0))$; it is entirely determined by its holonomy representation $H((\varphi_{ji}^{(\mu)})) : \pi_1(C) \rightarrow \mr{Diff}^{1}_{\mu}(\mb{C},0)$.
We would like to extend this cocycle to some cocycle in $H^1(C,\mr{Diff}^{1}_{\mu+1}(\mb{C},0))$.
Since $H_{\alpha_k}((\varphi_{ji}^{(\mu)}))$ is trivial for $k=1,\ldots,g$, extend $H_{\alpha_k}((\varphi_{ji}^{(\mu)}))$ to $\rho_{\alpha_k}=id$.
Next, extend the diffeomorphisms $H_{\beta_{k}}((\varphi_{ji}^{(\mu)}))$ to diffeomorphisms $\rho_{\beta_k}\in \mr{Diff}^{1}_{\mu+1}$ in any way.
Then $\prod_{k=1}^g[\rho_{\alpha_k},\rho_{\beta_k}]=id$ so the $(\rho_\gamma)_{\gamma}$ define a representation of $\pi_1(C)$ into $\mr{Diff}^{1}_{\mu+1}$ which corresponds to a cocycle $(\psi_{ji})$ such that $H_{\alpha_k}((\psi_{ji}))=\rho_{\alpha_k}$ and $H_{\beta_k}((\psi_{ji}))=\rho_{\beta_k}$.
We can then write
\[
y_j = \psi_{ji}(y_i) + {a'_{ji}}^{(\mu+1)} y_{i}^{\mu+1}
\]
for some ${a'_{ji}}^{(\mu+1)}$.
Next,
\begin{align*} 
y_k &= \psi_{kj}(y_j) + {a'_{kj}}^{(\mu+1)} y_{j}^{\mu+1}\\
    &= \psi_{kj} \left( \psi_{ji}(y_i) + {a'_{ji}}^{(\mu+1)} y_{i}^{\mu+1} \right) + {a'_{kj}}^{(\mu+1)} \left(\psi_{ji}(y_i) + {a'_{ji}}^{(\mu+1)} y_{i}^{\mu+1} \right)^{\mu+1}\\
    &= \psi_{kj} ( \psi_{ji}(y_i)) + ({a'_{ji}}^{(\mu+1)}+{a'_{jk}}^{(\mu+1)} ) y_{i}^{\mu+1} + \ldots
\end{align*}

Since $\psi_{ki} = \psi_{kj}\psi_{ji}$, we obtain ${a'_{ki}}^{(\mu+1)}\vert_C = {a'_{kj}}^{(\mu+1)}\vert_C + {a'_{ji}}^{(\mu+1)}\vert_C$ and thus $({a'_{ji}}^{(\mu+1)}\vert_C)$ is a cocycle in $H^1(C,\mc{O}_C)$.
By the exact sequence \eqref{eq_exact_sequence}, it is cohomologous to a constant cocycle $(c_{ji})\in H^1(C,\mb{C})$: there exists functions $b_i$ on $U_i$ such that ${a'_{ji}}^{(\mu+1)}\vert_C - c_{ji} = b_j\vert_C - b_i\vert_C$.
Still using the exact sequence \eqref{eq_exact_sequence}, we see that two cocycles $(c_{ji}),(c'_{ji})$ cohomologous to $({a'_{ji}}^{(\mu+1)}\vert_C)$ differ only by the periods of a $1$-form.
As noted before, $P_A\circ \delta: H^0(C,\Omega^1) \rightarrow H^1(C,\mb{C})$ is bijective so we can choose $(c_{ji})$ with trivial $A$-periods, and such a $(c_{ji})$ is unique.
Put $\varphi_{ji}^{(\mu+1)}(y) = \psi_{ji}(y) + c_{ji} y^{\mu+1}$ and $z_i = y_i - b_i y_i^{\mu+1}$ to obtain 
\begin{align*}
z_j &= \psi_{ji}(z_i) + ({a'_{ji}}^{(\mu+1)} -b_j+b_i) z_i^{\mu+1} + o(z_i^{\mu+1})\\
    &= \varphi_{ji}^{(\mu+1)}(z_i) + o(z_i^{\mu+1}).
\end{align*}
Since the choice of $(c_{ji})\in H^1(C,\mb{C})$ is unique, if two sets of functions $(z_i), (z'_i)$ are both $A$-normalized at order $\mu+1$ and coincide at order $\mu$, then they differ at order $\mu+1$ by a coboundary $(d_i)\in H^0(C,\mb{C})$: $z'_i = z_i + d_iz_i^{\mu+1} + \ldots$
Hence, they define the same foliation at order $\mu+1$.
\end{proof}

Putting all this together, we obtain theorem \ref{thm_constr_fol}.

\section{Classification of bifoliated neighborhoods}
\label{sec_bif_classification}

A \emph{bifoliated neighborhood} of $C$ is a tuple $(S,\F,\G)$ where $S$ is a neighborhood of $C$ and $\F$, $\G$ are distinct foliations on $S$ having $C$ as a common leaf.
Two bifoliated neighborhoods $(S,\F,\G)$ and $(S',\F',\G')$ are said to be equivalent if there are two neighborhoods $U\subset S$, $U'\subset S'$ of $C$ and a diffeomorphism $\phi : U \rightarrow U'$ fixing $C$ such that
\[
\phi_* \F = \F'\quad \text{and}\quad \phi_*\G = \G'.
\]
In this section, we want to study the classification of bifoliated neighborhoods under this equivalence relation.
We will consider here analytic equivalence, but the formal classification can be obtained by replacing the word "analytic" by "formal" everywhere.


A neighborhood will have a lot a formal foliations, and the canonical ones may diverge even though others might converge (cf. \cite{ltt}).
We will thus consider here a general bifoliated neighborhood $(S,\F,\G)$, with the additional assumptions that $\F$ and $\G$ coincide at order $1$ and their holomy representations are tangent to the identity.
The study can be done without these assumptions (cf. \cite{thom_these}), but they are true for the pair of canonical foliations and simplify the results (for example, in general an affine structure is involved which under our assumptions is only a translation structure, ie. a $1$-form).

\subsection{First invariants}

If $(S,\F,\G)$ is a bifoliated neighborhood, each foliation comes with the holonomy representation of the leaf $C$:
\[
\rho_\F,\rho_\G : \pi_1(C) \rightarrow \Df{},
\]
Fix a base point $p_0\in C$, a transversal $T_0$ passing through $C$ at $p_0$ and a coordinate $t$ on $T_0$ (ie. a function $t\in \Cg{} \mapsto q(t)\in T_0$.
Let $\gamma$ be a loop on $C$ based at $p_0$; choose the minimal first integral $F$ of $\F$ around $T_0$ such that $F(q(t)) = t$.
The analytic continuation $F^\gamma$ of $F$ along $\gamma$ is again a first integral of $\F$, hence is of the form 
\[
F^\gamma = \varphi_{\gamma}^{-1}\circ F.
\]
We define $\rho_\F(\gamma) = \varphi_{\gamma}$.

A second invariant is the order of tangency between $\F$ and $\G$ along $C$: take two $1$-forms $\alpha$ and $\beta$ on $S$ defining locally the foliations $\F$ and $\G$.
The $2$-form $\alpha \wedge \beta$ vanishes on $C$ so the order of vanishing of $\alpha \wedge \beta$ along $C$ gives a global invariant $k+1$ which does not depend on the choice of $\alpha$ and $\beta$.
The order of tangency between $\F$ and $\G$ is defined to be this integer $k$.
Our assumption that the foliations coincide at order $1$ exactly means that $k\geq 1$.

The next invariant is a $1$-form on $C$ associated to this pair of foliations.
Choose as before a point $p_0\in C$, a transversal $T_0$ at $p_0$ and a coordinate $t \mapsto q(t)$ on $T_0$.
Take local minimal first integrals $F$ and $G$ of $\F$ and $\G$ such that $F(q(t)) = G(q(t)) = t$.
By definition of $k$, $G=F+aF^{k+1}+\ldots$ in a neighborhood of $p$ for a local function $a$ on $C$.
Take the analytic continuations $F^\gamma, G^\gamma$ and $a^\gamma$ of $F,G$ and $a$ along a loop $\gamma$.
Then we can use the fact that the holonomy representations of $\F$ and $\G$ are tangent to the identity to get
\begin{align*}
G^\gamma &= F^\gamma + (a^\gamma) (F^\gamma)^{k+1} + \ldots\\
\rho_\G(\gamma)^{-1}\circ G &= \rho_\F(\gamma)^{-1}\circ F + (a^\gamma)(\rho_\F(\gamma)^{-1}\circ F)^{k+1} + \ldots\\
G &= \rho_\G(\gamma) \left(\rho_\F(\gamma)^{-1}\circ F + a^\gamma F^{k+1} + \ldots \right)\\
&= \rho_\G(\gamma)\circ \rho_\F(\gamma)^{-1}\circ F + a^\gamma F^{k+1}+\ldots\\
&= F + a F^{k+1} + \ldots
\end{align*}
Since $\rho_\G(\gamma)\circ \rho_\F(\gamma)^{-1}$ has constant coefficients, there exists constants $c^\gamma$ such that $a^\gamma = a+c^\gamma$.
Then the $1$-form $\omega = da$ is a well-defined $1$-form on $C$.

Note that we saw in the process that
\begin{equation}
\label{eq_compatibility_1}
\rho_\G(\gamma)\circ \rho_\F(\gamma)^{-1}(y) = y + \left(\int_{\gamma}\omega \right) y^{k+1}+\ldots,
\end{equation}
thus the form $\omega$ is entirely determined by $\rho_\F$ and $\rho_\G$.

Note also that the holonomy representation and the form $\omega$ depend on the choice of the transversal $T_0$ and of a coordinate $t$ on it.
A change of coordinate $\tilde{t} = \varphi(t)$ induces conjugacies on $\rho_\F$ and $\rho_\G$ and changes $\omega$ into some multiple of it: $\tilde{\rho}_\F(\gamma) = \varphi\circ \rho_\F(\gamma)\circ \varphi^{-1}$, $\tilde{\rho}_\G(\gamma) = \varphi\circ \rho_\G(\gamma)\circ \varphi^{-1}$ and $\tilde{\omega} = \varphi'(0)^{-k} \omega$.

\subsection{Tangency set}

If $F$ and $G$ are local minimal first integrals of $\F$ and $\G$, then the tangency set between $\F$ and $\G$ is defined to be
\[
\mr{Tang}(\F,\G) = \{ dF \wedge dG = 0\}.
\]
This definition does not depend on the choice of $F$ and $G$ and gives a well-defined analytic subset of $S$.

Note that if we write $G = F+aF^{k+1}+\ldots$, then we obtain $dF \wedge dG = F^{k+1} dF \wedge (da + \ldots)$.
Since $\omega=da$,
\[
\mr{Tang}(\F,\G)\cap C = \{\omega = 0\}.
\]
In particular, the set $\mr{Tang}(\F,\G)$ intersects $C$ at $2g-2$ points counting multiplicities.
In the sequel, we will suppose that we are in the generic case: $\omega$ has $2g-2$ distinct zeroes.
This also means that $\mr{Tang}(\F,\G)$ is the union of $2g-2$ curves which are transverse to $C$.

Denote $p_1,\ldots,p_{2g-2}$ the zeroes of $\omega$ and $T_i$ the tangency curve passing through $p_i$.
If we fix some simple paths $\gamma_{ij}$ between $p_i$ and $p_j$, we can look at the holonomy transports
\[
\varphi_{ij}^{\F},\varphi_{ij}^{\G}: T_i \rightarrow T_j
\]
following the leaves of $\F$ and $\G$ along $\gamma_{ij}$.
To simplify, suppose that the $\gamma_{1j}$ only intersect each other at $p_1$ and that $\gamma_{ij} = \gamma_{1i}^{-1}\cdot \gamma_{1j}$.
\begin{figure}[H]
\includegraphics[scale=0.5]{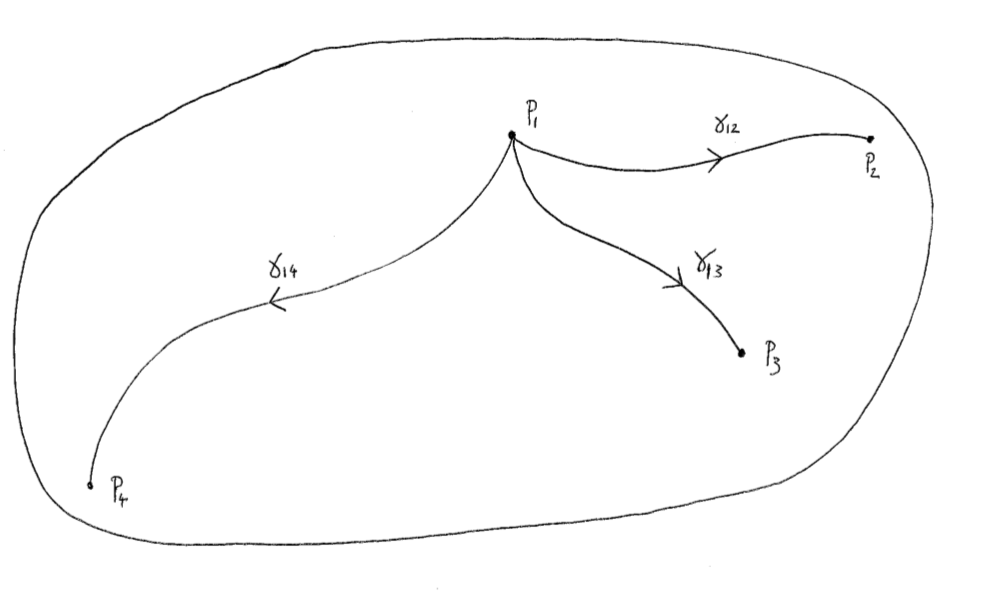}
\caption{The paths $\gamma_{1j}$}
\end{figure}

To define this, fix some coordinates $t_i,t_j$ on $T_i$ and $T_j$; there exists a simply connected neighborhood $U$ of the path $\gamma_{ij}$.
Let $L$ be a leaf of $\F$ on $U$.
It intersects $T_i$ at exactly one point (let $t_i$ be its coordinate).
In the same way, let $t_j$ be the coordinate of $L\cap T_j$.
We set $\varphi_{ij}^{\F}(t_i)=t_j$: this gives a germ of diffeomorphism $\varphi_{ij}^{\F}\in \Df{}$ which depends on the choices of coordinates on $T_i$ and $T_j$.
Their composition
\[
\varphi_{ij}^{\leftrightarrow} = (\varphi_{ij}^{\G})^{-1}\circ \varphi_{ij}^{\F}
\]
is a diffeomorphism of $T_i$ so only depends on the choice of a coordinate on $T_i$; a change of coordinate $t'_i=\varphi(t_i)$ acts by conjugacy $\varphi_{ij}'^{\leftrightarrow} = \varphi\circ \varphi_{ij}^{\leftrightarrow}\circ \varphi^{-1}$.

We can show as in the previous subsection that the holonomy transports $\varphi_{ij}^{\leftrightarrow}$ are related to the $1$-form $\omega$ by the relation:
\begin{equation}
\label{eq_compatibility_2}
\varphi_{ij}^{\leftrightarrow}(t_i) = t_i - \left(\int_{\gamma_{ij}}\omega \right) t_i^{k+1} + \ldots
\end{equation}


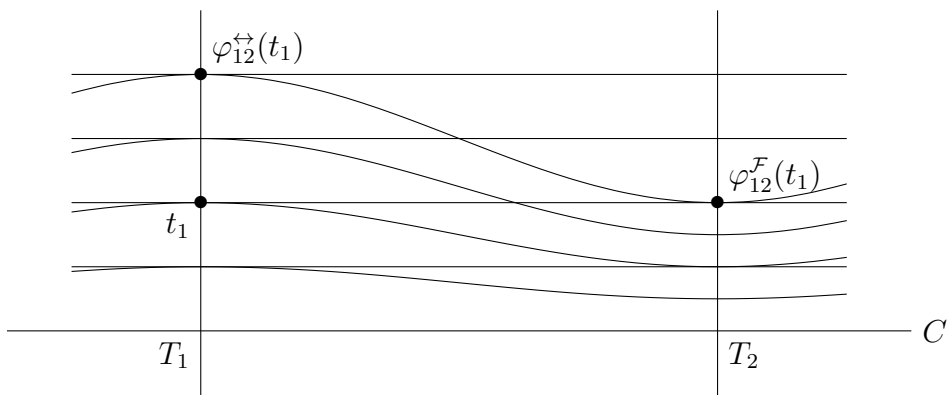
\begin{figure}[H]
\begin{center}
\begin{tikzpicture}[scale = 1.7]
\draw (-1.5,0) -- (5.5,0);
\draw (-1,0.5) -- (5,0.5);
\draw (-1,1) -- (5,1);
\draw (-1,1.5) -- (5,1.5);
\draw (-1,2) -- (5,2);
\draw (0,-0.5) -- (0,2.5);
\draw (4,-0.5) -- (4,2.5);
\draw plot [domain = -1:5,smooth] (\x,{1/2*(cos(180/4*(\x))-1)+2});
\draw plot [domain = -1:5,smooth] (\x,{1.5/4*(cos(180/4*(\x))-1)+1.5});
\draw plot [domain = -1:5,smooth] (\x,{1/4*(cos(180/4*(\x))-1)+1});
\draw plot [domain = -1:5,smooth] (\x,{1/8*(cos(180/4*(\x))-1)+0.5});
\draw (0,1) node {$\bullet$};
\draw (0,1) node[below left] {$t_1$};
\draw (4,1) node {$\bullet$};
\draw (4,1) node[above right] {$\varphi_{12}^\F(t_1)$};
\draw (0,2) node {$\bullet$};
\draw (0,2) node[above right] {$\varphi_{12}^{\leftrightarrow}(t_1)$};
\draw (0,0) node[below left] {$T_1$};
\draw (4,0) node[below right] {$T_2$};
\draw (5.5,0) node[right] {$C$};
\end{tikzpicture}
\end{center}
\caption{Holonomy transports}
\end{figure}

\subsection{Classification of bifoliated neighborhoods}

We say that a bifoliated neighborhood $(S,\F,\G)$ is generic if there are $2g-2$ distinct tangency curves $T_1,\ldots,T_{2g-2}$ between $\F$ and $\G$ and if they intersect $C$ transversely at some distinct points $p_1,\ldots,p_{2g-2}$.

On each neighborhood, we can fix one of these points, for example $p_1$, fix a coordinate $t$ on $T_1$, fix paths $\gamma_{1j}$ between $p_1$ and $p_j$ and compute every invariant on the transversal $T_1$ with coordinate $t$.
We thus have the holonomy representations $\rho_\F, \rho_\G$ and the holonomy transports $\varphi_{1j}^{\leftrightarrow}$ between $T_1$ and another tangency curve $T_j$.

The holonomy representations $\rho_\F$ and $\rho_\G$ are entirely determined by the images of the basis $\alpha_1,\ldots,\alpha_g,\beta_1,\ldots,\beta_g$: these are any diffeomorphisms such that

\begin{equation}
\label{eq_representation}
\begin{aligned}
\text{ }
[\rho_\F(\alpha_1),\rho_\F(\beta_1)]\ldots[\rho_\F(\alpha_g),\rho_\F(\beta_g)] &= id\\
[\rho_\G(\alpha_1),\rho_\G(\beta_1)]\ldots[\rho_\G(\alpha_g),\rho_\G(\beta_g)] &= id.
\end{aligned}
\end{equation}

%

Every invariant diffeomorphism found $\varphi = \rho_\F(\alpha_i), \rho_\F(\beta_i), \rho_\G(\alpha_i), \rho_\G(\beta_i), \varphi_{1j}^\leftrightarrow$ depend on the choice of the coordinate $t$.
A change of coordinate $t' = \psi(t)$ induces a conjugacy on $\varphi$: $\varphi' = \psi\circ \varphi\circ \psi^{-1}$.
So we define the invariant of a neighborhood $(S,\F,\G)$ to be
\begin{align*}
Inv(S,\F,\G) &= \left[ ((\rho_\F(\alpha_i))_{i=1}^{g},(\rho_\F(\beta_i))_{i=1}^g,(\rho_\G(\alpha_i))_{i=1}^{g},(\rho_\G(\beta_i))_{i=1}^g,(\varphi_{1j}^{\leftrightarrow})_{j=2}^{2g-2}) \right]\\
	     &\in \Df{}^{2g}\times\Df{}^{2g}\times\Df{}^{2g-3}/\sim
\end{align*}
where $\sim$ is the action of $\Df{}$ by conjugacy on each factor.

%

\begin{thm}
\label{thm_classif_bif}
Let $C$ be a curve of genus $g\geq 2$. 
Let $(S,\F,\G)$ and $(S',\F',\G')$ be two bifoliated neighborhoods of $C$ with 
tangency order $k$ and $1$-form $\omega$.
Suppose $k\geq 1$ and that $\omega$ has simple zeroes $p_1,\ldots,p_{2g-2}$.
Denote $T_1, T'_1$ the tangency curves passing through $p_1$ and compute the invariants $Inv(S,\F,\G)$ and $Inv(S',\F',\G')$ on the tangency curves $T_1, T'_1$.

Then $(S,\F,\G)$ and $(S',\F',\G')$ are diffeomorphic if and only if
\[
Inv(S',\F',\G')=Inv(S,\F,\G).
\]
\end{thm}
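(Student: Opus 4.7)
The implication $(S,\F,\G)\simeq (S',\F',\G')\Rightarrow Inv(S,\F,\G)=Inv(S',\F',\G')$ is immediate: an equivalence $\phi:S\to S'$ fixing $C$ with $\phi_*\F=\F'$ and $\phi_*\G=\G'$ conjugates simultaneously the two holonomy representations of the leaf $C$ and the holonomy transports between corresponding tangency curves. For the converse, my plan is to construct $\phi$ first over a simply connected fundamental domain of $C$, and then to use the equality of invariants to check compatibility with the boundary identifications.

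\textbf{Step 1 (Local rigidity).} Away from the tangency set $\bigcup_j T_j$, the foliations $\F$ and $\G$ share $C$ as a common leaf with coincidence order $k$. Using local first integrals $F,G$ of $\F,\G$ normalized on a transversal, one checks that the bifoliated germ is determined by $k$ and by the local expression $G=F+aF^{k+1}+\ldots$, hence by the form $\omega=da$ together with the choice of a transversal coordinate. Near a tangency curve $T_j$ (passing through a simple zero $p_j$ of $\omega$) one needs a \emph{bifoliated} local normal form depending only on $k$, on the local behavior of $\omega$ at $p_j$ and on a coordinate on $T_j$; the simplicity of the zero $p_j$ forces $T_j$ to meet $C$ transversely and should make this normalization rigid. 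This local uniqueness at tangency curves is the part I expect to be the main obstacle.

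\textbf{Step 2 (Fundamental domain).} Cut $C$ along the basis loops $\alpha_i,\beta_i$ and along the paths $\gamma_{1j}$, for $j=2,\ldots,2g-2$, to obtain a simply connected fundamental region $P\subset C$ whose boundary segments are identified in pairs by these loops and paths. Let $V\supset P$ be a tubular neighborhood in $S$ and $V'\supset P$ in $S'$. Since $V$ is simply connected and since $(S,\F,\G)$ and $(S',\F',\G')$ share the same $k$ and $\omega$ and the same coordinate $t$ on $T_1\simeq T'_1$, the local models of Step 1 glue consistently into a diffeomorphism $\phi_V:V\to V'$ fixing $P$, exchanging $\F$ with $\F'$ and $\G$ with $\G'$.

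\textbf{Step 3 (Descent).} To obtain a global diffeomorphism $\phi:S\to S'$ it suffices that $\phi_V$ commute with the pairwise identifications of the boundary of $P$. By construction these identifications are exactly the diffeomorphisms appearing in $Inv$: the boundary segments coming from $\alpha_i$ and $\beta_i$ are glued back by $\rho_\F(\alpha_i),\rho_\G(\alpha_i),\rho_\F(\beta_i),\rho_\G(\beta_i)$ depending on which foliation controls the transport, while those coming from $\gamma_{1j}$ are glued by $\varphi_{1j}^{\leftrightarrow}$. The hypothesis $Inv(S,\F,\G)=Inv(S',\F',\G')$ provides a single $\psi\in\Df{}$ conjugating all the corresponding diffeomorphisms at once; absorbing $\psi$ into the choice of coordinate on $T'_1$ (permitted by the equivalence $\sim$) makes the identifications agree on the nose, so $\phi_V$ descends to the desired $\phi:S\to S'$. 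The formal case is handled identically after replacing $\Df{}$ by $\Dfhat{}$ throughout.
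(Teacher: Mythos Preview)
Your overall strategy matches the paper's: establish local rigidity of the pair $(F,G)$ via normal forms, build the equivalence over a simply connected region, then use the holonomy data to globalize. The paper does not cut $C$ into a polygon; it starts from a simply connected neighborhood $Y$ of the star $\bigcup_j \gamma_{1j}$, constructs $\psi:Y\to Y'$ there, and then extends $\psi$ along arbitrary loops by checking that the analytic continuations $(\rho_\F(\gamma)^{-1}\circ F,\rho_\G(\gamma)^{-1}\circ G)$ are again matched. This difference from your polygon picture is largely cosmetic.

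There is, however, a genuine confusion in your Steps~2--3 about where the transports $\varphi_{1j}^{\leftrightarrow}$ enter. They are \emph{not} gluing maps along the slits $\gamma_{1j}$: cutting along a simple arc produces two boundary copies identified by the identity, and the bifoliated structure is smooth across such a slit, so no extra datum appears there. The role of $\varphi_{1j}^{\leftrightarrow}$ is \emph{local at the tangency point $p_j$}. The paper's Lemma~\ref{lem_fcts_tang} shows that near a simple zero of $\omega$ the pair $(F,G)$ is uniquely conjugate to $(y,\; b(y)+a(x)y^{k+1})$ with $a$ the primitive of $\omega$ vanishing at $p_j$, and Lemma~\ref{lem_hol_transport} identifies $b=\varphi_{1j}^{\leftrightarrow}$. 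Consequently the equality of the $\varphi_{1j}^{\leftrightarrow}$ is already needed to build $\phi_V$ over any region containing $p_j$ (your Step~2), not for a later descent. Once this is corrected, your Step~3 involves only the holonomies $\rho_\F,\rho_\G$ along $\alpha_i,\beta_i$, exactly as in the paper's final paragraph.

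The piece you flagged as the main obstacle --- the rigid local model at a simple tangency --- is precisely Lemma~\ref{lem_fcts_tang}; its proof is a short computation (set $\tilde y=F$, write $G-b(\tilde y)$, and use that $dF\wedge dG$ has divisor $(k+1)C+T_j$ to extract a square root in $x$). Together with the transverse normal form of Lemma~\ref{lem_fcts_transv}, namely $(y,\,y+a(x)y^{k+1})$, and its uniqueness, this gives everything you need to propagate the local equivalence along paths.
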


Before starting the proof, let us write some lemmas.

\begin{lem}
\label{lem_fcts_transv}
Let $p$ be a point in $C$, let $F$, $G$ be two reduced equations of $C$ around $p$ and $(x,y)$ some local coordinates with $C=\{y=0\}$.
Suppose that $F$ and $G$ are tangent at order $k$ and that the zero divisor of $dF \wedge dG$ is $(k+1)C$ (ie. there are no other tangencies).
There exists a unique diffeomorphism $\phi$ fixing $C$ pointwise such that
\[
(F,G)\circ \varphi = (y,y+a(x)y^{k+1}).
\]
The function $a$ is unique and satisfies $da\vert_C= \omega$.
\end{lem}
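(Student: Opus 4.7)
The plan is a two-stage rectification via the implicit function theorem.

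First, I would straighten $F$. Since $F$ vanishes to order one on $C=\{y=0\}$, write $F(x,y)=y\,u(x,y)$ with $u(x,0)\neq 0$; solving $F(x,Y_0(x,y))=y$ by the implicit function theorem produces a unique $Y_0$ with $Y_0(x,0)=0$, and $\varphi_1(x,y):=(x,Y_0(x,y))$ gives $F\circ\varphi_1=y$ while fixing $C$ pointwise.

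Second, I would analyze $G$ in these new coordinates and straighten it. Write $G=y\,v(x,y)$ with $v(x,0)\neq 0$; the computation $dF\wedge dG = -y\,v_x\,dx\wedge dy$ together with the hypothesis that the zero divisor of $dF\wedge dG$ equals $(k+1)C$ forces the coefficients $v_j(x):=\frac{1}{j!}\partial_y^j v(x,0)$ to be constant for $j<k$ and to satisfy $v_k'(x_p)\neq 0$ at the $x$-coordinate $x_p$ of $p$. The standing assumptions of Section~\ref{sec_bif_classification} ($\F$ and $\G$ coincide at order $1$, holonomies tangent to the identity) translate locally to $G=F+O(F^{k+1})$, so $v_0=1$ and $v_1=\cdots=v_{k-1}=0$, giving
\[
G(x,y) = y + v_k(x)\,y^{k+1} + y^{k+2}R(x,y).
\]
Now seek $\varphi_2(x,y)=(X(x,y),y)$ with $X(x,0)=x$: this automatically preserves $F=y$, and
\[
G\circ\varphi_2 = y + y^{k+1}\bigl[v_k(X(x,y)) + y\,R(X(x,y),y)\bigr].
\]
Requiring the bracket to depend only on $x$ forces (by setting $y=0$) $a(x)=v_k(x)$, and reduces the problem to solving $v_k(X)+y\,R(X,y) = v_k(x)$ for $X$; the $X$-derivative of the left side at $(x,0)$ equals $v_k'(x)\neq 0$, so the implicit function theorem furnishes a unique $X(x,y)$ with $X(x,0)=x$. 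Taking $\varphi:=\varphi_1\circ\varphi_2$ gives the desired diffeomorphism, and uniqueness of both $\varphi$ and $a$ follows from the uniqueness in each implicit function application.

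Finally, after the change of coordinates the exact identity $G\circ\varphi = F\circ\varphi + a(x)(F\circ\varphi)^{k+1}$ matches the local Taylor form used in Section~\ref{sec_bif_classification} to define $\omega=da$, which yields $da|_C=\omega$. The main technical obstacle is securing the non-degeneracy $v_k'(x_p)\neq 0$ needed in the second step; this is precisely what the assumption that $(k+1)C$ is the full zero divisor of $dF\wedge dG$ provides, by excluding tangency curves other than $C$ passing through $p$.
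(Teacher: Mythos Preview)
The paper does not actually prove this lemma; it defers to \cite{ltt}. Your two-stage rectification (straighten $F$ to $y$, then adjust the $x$-coordinate to absorb the higher-order part of $G$) is correct and in fact mirrors the method the paper uses to prove the companion Lemma~\ref{lem_fcts_tang}, so it is entirely in the expected spirit.

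Two small points are worth tightening. First, you justify $v_0=1$, $v_1=\cdots=v_{k-1}=0$ by invoking the ``standing assumptions'' of Section~\ref{sec_bif_classification}. That is unnecessary here: the hypothesis ``$F$ and $G$ are tangent at order $k$'' in the lemma statement already means $G-F$ vanishes to order $k+1$ along $C$, which gives these vanishings directly. (Without this, the normal form $(y,y+a(x)y^{k+1})$ would be unattainable, as the example $F=y$, $G=2y+xy^{k+1}$ shows.)

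Second, your uniqueness argument is a bit loose. The individual factor $\varphi_1$ with $F\circ\varphi_1=y$ and $\varphi_1|_C=\mathrm{id}$ is \emph{not} unique (any $(x,y)\mapsto(A(x,y),Y_0(A(x,y),y))$ with $A(x,0)=x$ works), so ``uniqueness in each implicit function application'' does not quite do it. The clean argument is global: if $\varphi,\tilde\varphi$ both work, then $\psi=\varphi^{-1}\circ\tilde\varphi$ fixes $C$ pointwise and preserves $(y,\,y+a(x)y^{k+1})$; the first component forces the second coordinate of $\psi$ to be $y$, and the second component then gives $a(P(x,y))=a(x)$, whence $P\equiv x$ since $a'=v_k'\neq 0$ by the divisor hypothesis. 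This also shows why the ``no other tangencies'' assumption is exactly what makes $\varphi$ unique.
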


The proof of this lemma can be found in \cite{ltt}.

%

\begin{lem}
\label{lem_fcts_tang}
Let $p$ be a point in $C$, let $F$, $G$ be two reduced equations of $C$ around $p$ and $(x,y)$ some local coordinates with $C=\{y=0\}$.
Suppose that there is a transversal $T$ to $C$ such that the zero divisor of $dF \wedge dG$ is $(k+1)C+T$.
Then there exists a unique diffeomorphism $\phi$ fixing $C$ pointwise such that
\[
(F,G)\circ \varphi = (y,b(y) + a(x)y^{k+1}).
\]
The function $b$ is unique and $a$ is the primitive of $\omega$ which is zero at $p$.
\end{lem}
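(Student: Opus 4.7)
The plan is to reduce to a two-stage normalization analogous to Lemma~\ref{lem_fcts_transv}. First I would choose a diffeomorphism $\varphi_0$ fixing $C$ pointwise with $F\circ\varphi_0=y$; this exists and is unique up to right-composition with diffeomorphisms of the form $(x,y)\mapsto(\psi(x,y),y)$ satisfying $\psi(x,0)=x$. Expanding $G\circ\varphi_0=\sum_{n\geq 1}G_n(x)y^n$ and computing $dF\wedge dG=-\sum_n y^n G_n'(x)\,dx\wedge dy$, the hypothesis that $dF\wedge dG$ vanishes at order $\geq k+1$ along $C$ forces $G_n$ to be constant for $n\leq k$. Hence $G\circ\varphi_0=b_0(y)+y^{k+1}\alpha(x,y)$ for some polynomial $b_0$ of degree $\leq k$, and $dF\wedge dG=-y^{k+1}\alpha_x(x,y)\,dx\wedge dy$, so $T=\{\alpha_x=0\}$. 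The transversality of $T$ to $C$ at $p$ translates into $\alpha_x(0,0)=0$ and $\alpha_{xx}(0,0)\neq 0$.

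The second stage uses the remaining freedom $\varphi_1:(x,y)\mapsto(\psi(x,y),y)$ with $\psi(x,0)=x$, after which $G\circ\varphi_0\circ\varphi_1=b_0(y)+y^{k+1}\alpha(\psi(x,y),y)$. The goal is to choose $\psi$ so that $\alpha(\psi(x,y),y)$ separates as $c(y)+a(x)$; then $b(y):=b_0(y)+c(y)y^{k+1}$ and the second component becomes $b(y)+a(x)y^{k+1}$. Evaluating at $y=0$ immediately forces $a(x)=\alpha(x,0)-\alpha(0,0)$; since locally on this chart $\omega=\alpha_x(x,0)\,dx$ (by the same computation as in the transversal case), this $a$ is precisely the primitive of $\omega$ vanishing at $p$, and in particular $a(0)=0$.

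The key technical step is the existence of $\psi$. Since $\alpha(\cdot,y)$ has a nondegenerate critical point near $x=0$ for small $y$, the Morse lemma with parameter yields a smooth curve $\tau(y)$ with $\tau(0)=0$ parametrizing $T$, together with a factorization $\alpha(x,y)-\alpha(\tau(y),y)=(x-\tau(y))^2 m(x,y)$ with $m(0,0)\neq 0$; analogously $a(x)=x^2 m(x,0)$. Taking $c(y):=\alpha(\tau(y),y)$, the equation $\alpha(\psi,y)-c(y)=a(x)$ becomes $(\psi-\tau(y))\sqrt{m(\psi,y)}=\pm x\sqrt{m(x,0)}$ (choosing a local holomorphic branch of the square root, which is possible since $m(0,0)\neq 0$). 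The $+$ branch is selected by the condition $\psi(x,0)=x$, and the implicit function theorem, with nonvanishing derivative $\sqrt{m(0,0)}$ at the origin, produces a unique smooth $\psi$. The main obstacle throughout is precisely the vanishing of $\alpha_x$ at $p$, which prevents a direct ODE solution and is resolved only by the Morse-type factorization.

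For uniqueness, suppose $\varphi'$ also satisfies the conclusion; then $\sigma=\varphi^{-1}\circ\varphi'$ preserves both $y$ and $b(y)+a(x)y^{k+1}$, and restricts to the identity on $C$. The first condition forces $\sigma=(\sigma_1(x,y),y)$, and the second gives $a(\sigma_1(x,y))=a(x)$. Since $a$ has a double zero at $0$ (because $\omega$ has a simple zero at $p$), the local equation $a(\xi)=a(x)$ has exactly two holomorphic branches $\xi=\pm x+\ldots$, and the boundary condition $\sigma_1(x,0)=x$ selects the identity branch; hence $\sigma=id$. Uniqueness of $a$ and $b$ then follows immediately from that of $\varphi$.
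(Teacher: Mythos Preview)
Your argument is correct and follows the same underlying strategy as the paper: first normalize $F$ to $y$, then use a square-root factorization to normalize the $x$-direction. The paper's proof is organized a bit differently and is slightly shorter: instead of keeping an arbitrary $x$ and invoking the Morse lemma with parameter to locate $T=\{x=\tau(y)\}$, it takes $x$ to be a reduced equation of $T$ from the start and defines $b$ \emph{geometrically} by $G|_T=b(F|_T)$. Then $H:=G-b(F)$ automatically vanishes on $T$, and the divisor hypothesis gives directly $H=x^2\tilde y^{k+1}v$ with $v$ invertible; the square root $x\sqrt{v}$ plays the same role as your $(\psi-\tau(y))\sqrt{m(\psi,y)}$. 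What your approach buys is that it makes the dependence of $a$ on $\omega$ explicit (via $a(x)=\alpha(x,0)-\alpha(0,0)$) and spells out the uniqueness argument in detail, whereas the paper identifies $b$ conceptually but leaves the verification that $a$ is the primitive of $\omega$ and the uniqueness of $\varphi$ to the reader. Both arrive at the same square-root/implicit-function step; yours just discovers $\tau$ and $c$ along the way, while the paper's coordinate choice builds them in.
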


The function $b$ is of course entirely determined by the equation $G\vert_T = b(F\vert_T)$.

\begin{proof}
Put $\tilde{y}=F$, $b$ the function determined by $G\vert_T = b(F\vert_T)$, $H = G-b(\tilde{y})$ and suppose $x$ is a reduced equation of $T$.
Then $dF \wedge dG = (\partial_{x}H)dF \wedge dx$ so by the hypotheses on the tangency divisor, $\partial_{x}H = 2x\tilde{y}^{k+1}u$ for some invertible function $u$.
Then $H = x^2\tilde{y}^{k+1}v$ with $v$ invertible so for $\phi(x,\tilde{y}) = x\sqrt{v}$, we have $G = b(\tilde{y}) + \phi(x,\tilde{y})^2\tilde{y}^{k+1}$.

If $\psi = \phi\vert_C$, then the coordinate $\tilde{x}=\psi^{-1}\circ \phi(x,\tilde{y})$ is equal to $x$ on $C$ and $(F,G) = (\tilde{y},b(\tilde{y}) + \psi(\tilde{x})^2\tilde{y}^{k+1})$.
Thus the diffeomorphism $\varphi(x,y) = (\tilde{x},\tilde{y})$ is as sought.
\end{proof}

\begin{lem}
\label{lem_hol_transport}
Let $(S,\F,\G)$ be a bifoliated neighborhood whose $1$-form $\omega$ has simple zeroes, let $T_1,T_j$ be two tangency curves and $\gamma_{1j}$ a simple path between $p_1=T_1\cap C$ and $p_j=T_j\cap C$.
Suppose $F$ and $G$ are some submersive first integrals of $\F$ and $\G$ around $p_1$ such that $F\vert_{T_1} = G\vert_{T_1}$.
By lemma \ref{lem_fcts_tang}, the analytic continuations of $F$ and $G$ along $\gamma_{1j}$ can be written $F=y$ and $G=b(y) + a(x)y^{k+1}$ for some coordinates $(x,y)$ around $p_j$.

Then $b(y) = \varphi_{1j}^{\leftrightarrow}(y)$ if $\varphi_{1j}^{\leftrightarrow}$ is computed in the coordinate $t=y$ on $T_1$.
\end{lem}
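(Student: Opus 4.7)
The strategy is to compute the two holonomy transports $\varphi_{1j}^{\F}$ and $\varphi_{1j}^{\G}$ separately in a well-chosen coordinate on $T_j$, and observe that the coordinate on $T_j$ disappears in the composition defining $\varphi_{1j}^{\leftrightarrow}$. I would first set up the local picture around $p_j$ given by lemma \ref{lem_fcts_tang}: coordinates $(x,y)$ in which $C=\{y=0\}$, $T_j=\{x=0\}$, and the analytic continuations of $F,G$ along $\gamma_{1j}$ read $F=y$ and $G=b(y)+a(x)y^{k+1}$, where $a$ is the primitive of $\omega$ vanishing at $p_j$, so in particular $a(0)=0$. As the coordinate on $T_j$, I would take $y\vert_{T_j}$.

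Next I would compute $\varphi_{1j}^{\F}$. Near $p_j$ the leaves of $\F$ are the level sets $\{y=\textrm{const}\}$. A point of $T_1$ with coordinate $t$ (that is, $F=t$) lies on the leaf $\{F=t\}$, which after analytic continuation is $\{y=t\}$; this meets $T_j=\{x=0\}$ at the point with $y=t$. Hence $\varphi_{1j}^{\F}(t)=t$ in the chosen coordinates. For $\varphi_{1j}^{\G}$, the leaf $\{G=t\}$ continues to the curve $\{b(y)+a(x)y^{k+1}=t\}$; restricting to $T_j=\{x=0\}$ and using $a(0)=0$, this reduces to $b(y)=t$, so the intersection point has $y$-coordinate $b^{-1}(t)$. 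Thus $\varphi_{1j}^{\G}(t)=b^{-1}(t)$.

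Composing, $\varphi_{1j}^{\leftrightarrow}(t)=(\varphi_{1j}^{\G})^{-1}\circ \varphi_{1j}^{\F}(t)=b(t)$, which is precisely the claim that $\varphi_{1j}^{\leftrightarrow}(y)=b(y)$ when $T_1$ is given the coordinate $t=y=F\vert_{T_1}$. Observe that any other choice of coordinate on $T_j$ would only replace both $\varphi_{1j}^{\F}$ and $\varphi_{1j}^{\G}$ by the same post-composition with a change of coordinate, which cancels in $(\varphi_{1j}^{\G})^{-1}\circ\varphi_{1j}^{\F}$, confirming that this is a well-defined statement about $\varphi_{1j}^{\leftrightarrow}$.

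There is no real obstacle here: the lemma amounts to unwinding the definition of the holonomy transport in a coordinate system tailored to the tangency curve. The only point requiring attention is the identification of the parameter $t$ on $T_1$ with the value of $y$ near $p_j$ via the analytically continued first integral $F$, and the observation that $a(0)=0$ so that $G\vert_{T_j}=b(y)$; both are built into the hypothesis and the normal form from lemma \ref{lem_fcts_tang}.
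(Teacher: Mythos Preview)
Your proof is correct and follows essentially the same approach as the paper's: both amount to recognising that, once $F\vert_{T_1}=G\vert_{T_1}$, the relation $G\vert_{T_j}=b\circ F\vert_{T_j}$ characterising $b$ is the same as the relation $G\vert_{T_j}=\varphi_{1j}^{\leftrightarrow}\circ F\vert_{T_j}$ characterising the holonomy transport. The only difference is cosmetic: you fix an auxiliary coordinate $y\vert_{T_j}$ on $T_j$ and compute $\varphi_{1j}^{\F}$ and $\varphi_{1j}^{\G}$ separately before composing, whereas the paper works directly with the composite $\varphi_{1j}^{\leftrightarrow}$ and never names a coordinate on $T_j$.
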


\begin{proof}
Indeed, $b$ is characterised by $G\vert_{T_j} = b\circ F\vert_{T_j}$, and $\varphi_{1j}^{\leftrightarrow}$ by the fact that the leaf of $\F$ passing through $T_1$ at the point of coordinate $F=y_0$ intersects (tangentially) on $T_j$ the leaf of $\G$ passing through $T_1$ at the point of coordinate $F=\varphi_{1j}^{\leftrightarrow}(y_0)$.
This means that the first integral $\varphi_{1j}^{\leftrightarrow}\circ F$ of $\F$ coincides with $G$ on $T_j$, ie $\varphi_{1j}^{\leftrightarrow}\circ F\vert_{T_j} = G\vert_{T_j}$, hence the result.
\end{proof}

\begin{proof}[Proof of theorem \ref{thm_classif_bif}]
Take two bifoliated neighborhoods $(S,\F,\G)$ and $(S',\F',\G')$ with the same tangency index $k$, $1$-form $\omega$ and the same invariants computed in some coordinates $t,t'$ on $T_1$ and $T'_1$.

Begin by fixing simply connected neighborhoods $Y$, $Y'$ of $\cup_{j=2}^{2g}\gamma_{1j}$ in $S$ and $S'$.
We begin by showing that $(Y,\F,\G)$ and $(Y',\F',\G')$ are diffeomorphic, and we will then show that this diffeomorphism can be extended to $S$ and $S'$.
\begin{figure}[H]
\includegraphics[scale=0.75]{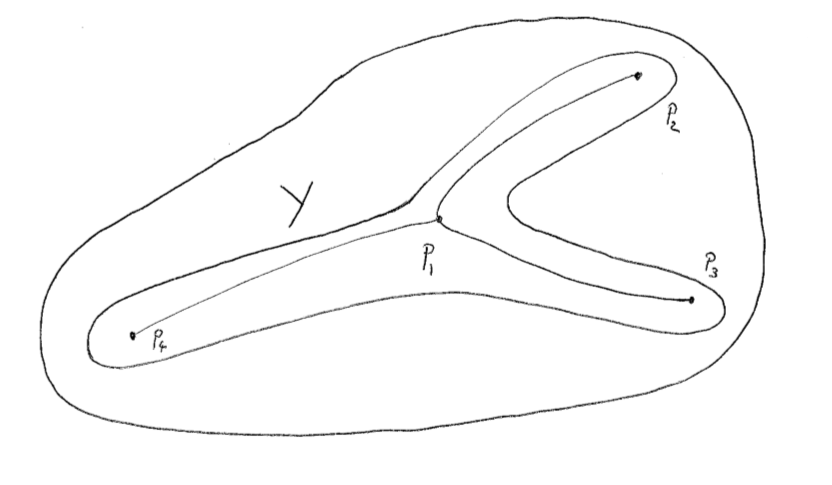}
\caption{The neighborhood $Y$}
\end{figure}

Since $Y$ and $Y'$ are simply connected, the foliations on these sets have first integrals $F,G,F',G'$ and we can suppose that $F(t)=G(t)$ and $F'(t')=G'(t')$ on $T_1$ and $T'_1$.
Since $\omega=\omega'$, lemma \ref{lem_fcts_tang} tells us that there is a (unique) diffeomorphism $\psi$ between a neighborhood of $p_1$ in $S$ and a neighborhood of $p_1$ in $S'$ such that $F'\circ \psi = F$ and $G'\circ \psi=G$.
We can take the analytic continuation of $F,G,F'$ and $G'$ along one of the paths $\gamma_{1j}$.
For any point $p$ in this path, lemma \ref{lem_fcts_transv} tells us that the pairs $(F,G)$ and $(F',G')$ are equivalent (by a unique diffeomorphism) if and only if the number $a(p)$ is the same for both couples.
But $a(p) = \int_{p_1}^p \omega$ where the integral is taken along the path $\gamma_{1j}$ so it is the case.
By unicity, the diffeomorphism $\psi$ can be extended along the path $\gamma_{1j}$ arbitrarily near the point $p_j$.

At the point $p_j$, lemmas \ref{lem_fcts_tang} and \ref{lem_hol_transport} show that the pairs $(F,G)$ and $(F',G')$ are also conjugated by a unique diffeomorphism in a neighborhood of $p_j$.
Hence, we can extend $\psi$ to a diffeomorphism $\psi:Y \rightarrow Y'$ conjugating the pairs $(F,G)$ and $(F',G')$.

By the lemma \ref{lem_fcts_transv}, we can also extend $\psi$ along any simple path.
Then we only need to show that $\psi$ can be extended along a non-trivial loop.
Let $\gamma$ be a non-trivial loop on $C$ based at $p_1$, $\varphi_\F = \rho_\F(\gamma)$ and $\varphi_\G = \rho_\G(\gamma)$.
The extensions of $F$ and $G$ along $\gamma$ are $\varphi_{\F}^{-1}\circ F$ and $\varphi_\G^{-1}\circ G$; we know that $F'\circ \psi=F$ and $G'\circ \psi=G$, so $\varphi_\F^{-1}\circ F'\circ \psi = \varphi_\F^{-1}\circ F$ and $\varphi_\G^{-1}\circ G'\circ \psi = \varphi_\G^{-1}\circ G$.
Hence $\psi$ is the diffeomorphism conjugating $(\varphi_\F^{-1}\circ F,\varphi_\G^{-1}\circ G)$ with $(\varphi_\F^{-1}\circ F',\varphi_\G^{-1}\circ G')$ and by unicity this means that $\psi$ can be extended along any loop.
Thus $\psi$ can be extended to a diffeomorphism between $(S,\F,\G)$ and $(S',\F',\G')$.
\end{proof}

\subsection{Construction of bifoliated neighborhoods}

We saw three restrictions for a set of diffeomorphisms to be an invariant of some bifoliated neighborhood: these are the compatibility relations \eqref{eq_compatibility_1}, \eqref{eq_compatibility_2} and \eqref{eq_representation}.
These are the only restrictions; to obtain a simpler result, we will consider the $1$-form $\omega$ as an invariant here.

\begin{thm}
\label{thm_constr_bif}
Let $((\varphi_i^1)_{i=1}^{2g},(\varphi_i^2)_{i=1}^{2g},(\varphi_j^3)_{j=2}^{2g-3})$ be some diffeomorphisms; let $k$ be an integer and $\omega$ a $1$-form.
They define a bifoliated neighborhood $(S,\F,\G)$ with $\F$ and $\G$ tangent at order $k$ and with $1$-form $\omega$ if and only if every $\varphi_r^s$ is tangent to identity at order (at least) $k$ and if they satisfy the relations \eqref{eq_compatibility_1}, \eqref{eq_compatibility_2} and \eqref{eq_representation}.
\end{thm}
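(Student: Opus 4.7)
The necessity of the three compatibility relations was established in the preceding subsections, so I concentrate on sufficiency: given the data, I will construct an analytic (or formal) bifoliated neighborhood realizing it, by a cut-and-paste procedure. Fix a fundamental $4g$-gon $P$ for $C$ with sides labelled by $\alpha_i, \beta_i$ and all vertices identified with $p_1$, and choose simple disjoint paths $\gamma_{1j}$ from $p_1$ to each tangency point $p_j$ inside $P$. Let $a$ be the primitive of $\omega$ on $P$ with $a(p_1) = 0$, well-defined since $P$ is simply connected, and thicken $P$ to a tubular neighborhood $Y = P \times (\mb{C}, 0)$.

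On $Y$ I first build a local model for the pair $(\F, \G)$. Away from the points $p_j$ I use the lemma \ref{lem_fcts_transv} normal form $(F, G) = (y, y + a(x) y^{k+1})$ in local coordinates adapted to $C = \{y = 0\}$. Near each $p_j$ I switch to the lemma \ref{lem_fcts_tang} normal form $(F, G) = (y, \varphi_j^3(y) + \tilde{a}_j(x) y^{k+1})$, where $\tilde{a}_j$ is the local primitive of $\omega$ vanishing at $p_j$. The compatibility of these two models on an annulus around $p_j$ is exactly where \eqref{eq_compatibility_2} enters: it fixes the $(k+1)$-jet of $\varphi_j^3$ to be $t - a(p_j) t^{k+1}$, so that the two expressions of $G$ differ only by an analytic change of the transverse coordinate, and lemma \ref{lem_hol_transport} ensures that this change respects both foliations simultaneously.

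Next I glue the $4g$ sides of $Y$ pairwise. When two sides of $P$ are identified by a generator $\gamma$, I glue the corresponding slabs of $Y$ by the ambient diffeomorphism that conjugates the first integrals via $F_+ = \rho_\F(\gamma)^{-1} \circ F_-$ and $G_+ = \rho_\G(\gamma)^{-1} \circ G_-$, where $\rho_\F(\gamma)$ and $\rho_\G(\gamma)$ are the prescribed $\varphi_\star^1$ and $\varphi_\star^2$. The existence of a single ambient diffeomorphism conjugating both pairs simultaneously is equivalent to $\rho_\G(\gamma) \circ \rho_\F(\gamma)^{-1}$ agreeing with the translation by $\int_\gamma \omega$ at order $k+1$, which is precisely \eqref{eq_compatibility_1}. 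The cycle condition that these $4g$ identifications close up into a neighborhood of $C$ (rather than of its universal cover) at the vertex $p_1$ is exactly \eqref{eq_representation}.

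The main obstacle I anticipate is verifying that the various ambient diffeomorphisms provided by lemmas \ref{lem_fcts_transv} and \ref{lem_fcts_tang} assemble into a genuine analytic atlas rather than merely a jet-wise consistent family. This is handled by the uniqueness parts of those lemmas, which force any two admissible normalizations to differ by an analytic diffeomorphism with prescribed first-order behaviour; the three compatibility relations then guarantee that the cochain-level discrepancies are coboundaries and thus can be absorbed into coordinate changes. Once $(S, \F, \G)$ is assembled, every prescribed invariant is realized by construction: the holonomies along $\alpha_i, \beta_i$ recover $\varphi_i^1, \varphi_i^2$ by definition of the gluing; the tangency order and form are $k$ and $\omega$ because the explicit local models have these; and the holonomy transports $\varphi_{1j}^\leftrightarrow$ equal the $\varphi_j^3$ by lemma \ref{lem_hol_transport}. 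The formal case is obtained by reading the whole argument in $\widehat{\mr{Diff}}(\mb{C},0)$ rather than in $\mr{Diff}(\mb{C},0)$.
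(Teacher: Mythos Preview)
Your argument is correct and follows essentially the same cut-and-paste strategy as the paper: build the trivial model $(F,G)=(y,\,y+a(x)y^{k+1})$ over a fundamental domain, glue opposite sides using lemma \ref{lem_fcts_transv} together with \eqref{eq_compatibility_1} and \eqref{eq_representation}, and install the local models $(y,\varphi_j^3(y)+\tilde a_j(x)y^{k+1})$ near the $p_j$ using lemma \ref{lem_fcts_tang} and \eqref{eq_compatibility_2}. The only cosmetic difference is that the paper first excises discs around the $p_j$, glues the sides, and then fills the resulting holes with patches, whereas you insert the tangency patches before performing the side identifications; either order works.
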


\begin{proof}
Denote by $\rho_1$ and $\rho_2$ the representations given by the diffeomorphisms $(\varphi_i^1)$ and $(\varphi_i^2)$.
Consider $\tilde{C}=\mb{D}_x$ the universal cover of $C$, $X$ a small neighborhood of a fundamental domain, $U_i$ a small neighborhood of $p_i$ in $X$ and $\check{C} = X\setminus(U_2\cup\ldots\cup U_{2g-2})$.
\begin{figure}[H]
\includegraphics[scale=0.6]{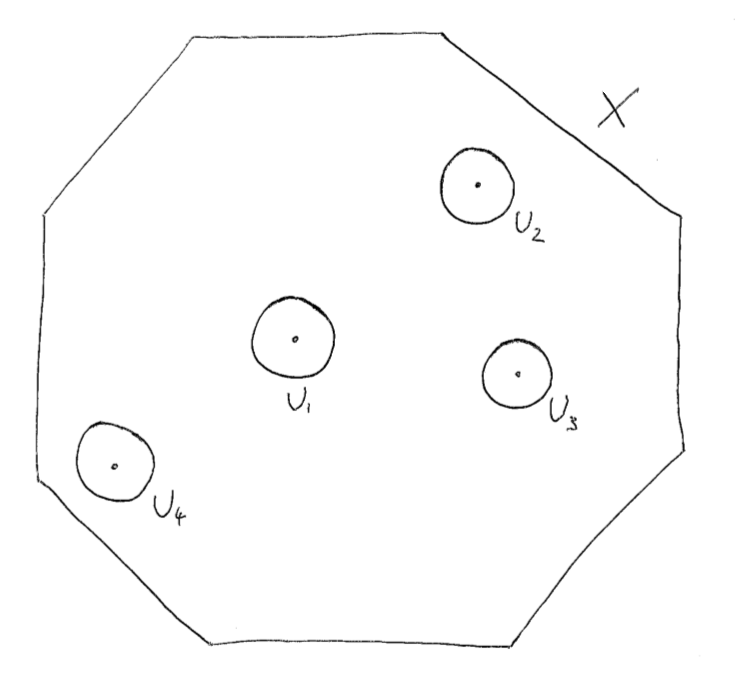}
\caption{The neighborhood $X$}
\end{figure}

Consider next the trivial bundle $\check{S} = \check{C}\times \mb{C}_y$ along with two functions $F=y$ and $G = y + a(x)y^{k+1}$ (with $a(x) = \int_{p_1}^x \omega$).
We now want to glue the borders of $\check{S}$ together: for this, we need to show that there exists for each loop $\gamma$ a diffeomorphism $\psi_{\gamma}$ defined when it makes sense such that $\psi_{\gamma}\vert_{\check{C}} = \gamma$ and
\[
(\rho_1(\gamma)\circ F, \rho_2(\gamma)\circ G) = (F\circ \psi_{\gamma},G\circ \psi_{\gamma}).
\]
Thanks to the compatibility condition \eqref{eq_compatibility_1} and lemma \ref{lem_fcts_transv}, the couples $(\rho_1(\gamma)\circ F,\rho_2(\gamma)\circ G)$ and $(F\circ \gamma,G\circ \gamma)$ are diffeomorphic so we can indeed find such a $\psi_{\gamma}$.
We can then glue the borders of $\check{S}$ together to obtain a surface which is a neighborhood of $C$ with holes $H_i$ around $p_i$ ($i=2,\ldots,2g-2$) and two foliations $\F$ and $\G$ transverse outside the holes.
The holonomies of these foliations are $\rho_\F = \rho_1$ and $\rho_\G = \rho_2$ by construction.

To fill these holes, take $C_i$ a neigrborhood of $p_i$ in $X$ slightly larger than $U_i$ and consider the patch $P_i = C_i\times \mb{C}_y$.
Consider on $P_i$ the couple 
\[
(\tilde{F},\tilde{G}) = (y,\varphi_i^3(y)+(x-p_i)^2y^{k+1}).
\]
By lemma \ref{lem_fcts_transv} and compatibility condition \eqref{eq_compatibility_2}, for every point $p$ near the boundary of the hole $H_i$, there exists a unique diffeomorphism $\psi$ between a neighborhood of $p$ in $\check{S}$ and a neighborhood of $p$ in $P_i$ sending $(F,G)$ to $(\tilde{F},\tilde{G})$.
By unicity, these diffeomorphisms glue to a diffeomorphism between neighborhoods of the boundaries of $H_i$ and $P_i$ and we can then glue the patch $P_i$ onto $H_i$ using this diffeomorphism.
By the lemma \ref{lem_hol_transport}, we then have $\varphi_{1i}^{\leftrightarrow} = \varphi_i^3$ which concludes the proof.
\end{proof}

\section{Formal classification of neighborhoods}
\label{sec_formal_classification}

We know how to construct two canonical foliations on any neighborhood, and we know the classification of bifoliated neighborhoods, so we only need to put this together.

Denote $\F$ and $\G$ the $A$- and $B$-canonical foliations.
Note that if $\F=\G$, then they define a fibration tangent to $C$, so this case can be treated by Kodaira's deformation theory.
Suppose this is not the case and $\F\neq\G$, then their order of tangency $k$ is the Ueda index of $S$.
Moreover, let $(u_{ij})\in H^1(C,\mc{O}_C)$ be the Ueda class of the neighborhood and $(a_{ij}),(b_{ij})\in H^1(C,\mb{C})$ the cocycles defining the $(k+1)$-th order holonomy of $\F$ and $\G$.
By definition, the images of $(a_{ij})$ and $(b_{ij})$ under the map $H^1(C,\mb{C}) \rightarrow H^1(C,\mc{O}_C)$ are both $(u_{ij})$.
Thus by the exact sequence \eqref{eq_exact_sequence}, the cocycle $(b_{ij}-a_{ij})$ is given by a $1$-form: this $1$-form is exactly $\omega$.
To sum up, we have constructed an application $H^1(C,\mc{O}_C) \rightarrow H^0(C,\Omega^1)$; this is a bijection because we can find $(a_{ij})$ (and thus $(u_{ij})$) from $\omega$ as the cocycle with null $A$-periods and with $B$-periods equal to those of $\omega$.

By extension, we will call this form the Ueda form of the neighborhood.
The Ueda class (and thus the Ueda form) is well-defined only up to a multiplicative constant, but the set of its zeroes is well-defined.
The situation will be quite different depending on the tangency set between $\F$ and $\G$, so suppose that $\omega$ has only simple zeroes (so that the tangency set consists of $2g-2$ simple transversal tangency curves).
Denote by $\ms{V}(C,k,\omega)$ the space of $2$-dimensional formal neighborhoods of $C$ with trivial normal bundle, Ueda index $k<\infty$ and Ueda form (a multiple of) $\omega$ modulo formal equivalence.

\begin{thm}
\label{thm_formal_classification}
Let $C$ be a curve of genus $g\geq 2$, $1\leq k<\infty$ and $\omega$ a $1$-form on $C$ with simple zeroes.
Then there is an injective map 
\[
\Phi : \ms{V}(C,k,\omega) \hookrightarrow \Dfhat{}^g\times\Dfhat{}^g\times\Dfhat{}^{2g-3}/\sim
\]
where the equivalence relation $\sim$ is given by the action of $\Df{}$ on $\Df{}^N$ by conjugacy on each factor.
%

A tuple of diffeomorphisms $((\varphi_{i}^{(j)})_i)_{j=1}^3$ is in the image of $\Phi$ if and only if the $\varphi_{i}^{(j)}$ are tangent to the identity at order $k$ and if they satisfy the compatibility conditions \eqref{eq_compatibility_1} and \eqref{eq_compatibility_2}.
\end{thm}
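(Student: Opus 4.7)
The plan is to assemble the pieces from the previous two sections: Theorem \ref{thm_constr_fol} attaches canonical foliations to each neighborhood, and Theorem \ref{thm_classif_bif} then classifies the resulting bifoliated neighborhoods.

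First, I define $\Phi$. Given $S\in \ms{V}(C,k,\omega)$, I apply Theorem \ref{thm_constr_fol} and its $B$-symmetric version to obtain the unique $A$-canonical foliation $\F$ and $B$-canonical foliation $\G$ on $S$. The case $\F=\G$ corresponds to a tangent fibration, which is excluded by the hypothesis that $k$ is finite; otherwise the tangency order of the pair is the Ueda index $k$ and the tangency $1$-form agrees with the Ueda form $\omega$, by the identification made at the start of the section. I then apply the formal version of Theorem \ref{thm_classif_bif} to extract the bifoliated invariant in $\Dfhat{}^{2g}\times\Dfhat{}^{2g}\times\Dfhat{}^{2g-3}/\sim$. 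The $A$-canonicity of $\F$ forces $\rho_\F(\alpha_i)=id$ and the $B$-canonicity of $\G$ forces $\rho_\G(\beta_i)=id$; dropping these $2g$ trivial entries (which is compatible with the conjugacy action, since $id$ is fixed by conjugation) produces
\[
\Phi(S)=\left[\bigl((\rho_\G(\alpha_i))_{i=1}^g,\,(\rho_\F(\beta_i))_{i=1}^g,\,(\varphi_{1j}^{\leftrightarrow})_{j=2}^{2g-2}\bigr)\right]\in \Dfhat{}^{4g-3}/\sim.
\]
Injectivity is immediate from Theorem \ref{thm_classif_bif}: equality of $\Phi(S)$ and $\Phi(S')$ lifts to equality of the full $(6g-3)$-tuples, hence to a formal bifoliated equivalence $(S,\F,\G)\simeq(S',\F',\G')$, and in particular to a formal equivalence $S\simeq S'$ of neighborhoods.

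For the image characterization, I start with a tuple $((\varphi_i^{(j)})_i)_{j=1}^3$ tangent to identity at order $k$ satisfying \eqref{eq_compatibility_1} and \eqref{eq_compatibility_2}, and pad it by inserting $g$ identities for $\rho_\F(\alpha_i)$ and $g$ identities for $\rho_\G(\beta_i)$ to produce a candidate bifoliated invariant. The two representation relations \eqref{eq_representation} then hold automatically, since every commutator with an identity factor is trivial; the compatibility condition \eqref{eq_compatibility_1} for the loops $\alpha_i,\beta_i$ reduces to conditions on $\rho_\G(\alpha_i)$ and $\rho_\F(\beta_i)$ which are exactly the hypotheses; and \eqref{eq_compatibility_2} is unchanged. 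Applying the formal version of Theorem \ref{thm_constr_bif} yields a bifoliated formal neighborhood $(S,\F,\G)$ with the prescribed tangency order $k$ and $1$-form $\omega$. Since the linear parts of all the given holonomies are trivial, the resulting $\F$ is $A$-canonical and $\G$ is $B$-canonical by construction; the uniqueness in Theorem \ref{thm_constr_fol} then identifies them with the canonical foliations of $S$, so the tuple we started with is precisely $\Phi(S)$.

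The main obstacle is a bookkeeping one: one must check that the neighborhood produced from the tuple genuinely lies in $\ms{V}(C,k,\omega)$, i.e.\ has trivial normal bundle, Ueda index $k$, and Ueda form proportional to $\omega$. Triviality of $N_C$ follows from the linear part of the holonomies being trivial; the identification of the Ueda index and Ueda form with the tangency order and tangency $1$-form of the pair $(\F,\G)$ is the content of the discussion preceding the theorem, but it has to be applied in the reverse direction. A second point of care is to verify that the passage from the $6g-3$-tuple equivalence to the $4g-3$-tuple equivalence is genuinely an equivalence of quotients: conjugation fixes the identities one inserts, so each conjugacy class of padded tuples corresponds bijectively to a conjugacy class of reduced tuples, which is what makes $\Phi$ well-defined and its inverse on the image well-defined.
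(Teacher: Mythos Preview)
Your proposal is correct and follows essentially the same approach as the paper: construct the $A$- and $B$-canonical foliations via Theorem~\ref{thm_constr_fol}, drop the $2g$ forced identity holonomies from the bifoliated invariant, use Theorem~\ref{thm_classif_bif} for injectivity, and Theorem~\ref{thm_constr_bif} (with the representation relation~\eqref{eq_representation} trivially satisfied) for the image. The paper makes one point slightly more explicit than you do, namely that a formal equivalence $S\simeq S'$ carries canonical foliations to canonical foliations (by their uniqueness), which is what makes $\Phi$ well-defined on equivalence classes; you invoke this uniqueness only in the realization step, but the same observation is needed upstream.
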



\begin{proof}
Fix a zero $p_1$ of $\omega$, fix some loops $\alpha_1,\ldots,\alpha_g,\beta_1,\ldots,\beta_g$ forming a symplectic basis of $H_1(C,\mb{C})$, fix some paths $\gamma_{1j}$ between $p_1$ and $p_j$.

Let $[S]\in \ms{V}(C,k,\omega)$ and $S$ be a representative of $[S]$.
Let $\F$ and $\G$ be respectively the $A$-canonical and the $B$-canonical foliations on $S$.
%
%
Let $\varphi_{\tau}^{\F}$ and $\varphi_{\tau}^{\G}$ be the holonomies of $\F$ and $\G$ along the loops $\tau=\alpha_1,\ldots,\beta_g$; let $\varphi_{1j}^{\leftrightarrow} = (\varphi_{1j}^{\G})^{-1}\circ \varphi_{1j}^{\F}$ be computed along the path $\gamma_{1j}$.
We put
\[
\theta(S) = ((\varphi_{\beta_i}^{\F})_{i=1}^{g},(\varphi_{\alpha_i}^{\G})_{i=1}^g,(\varphi_{1j}^{\leftrightarrow})_{j=2}^{2g-2})\quad\text{and}\quad \Phi([S]) = [\theta(S)]
\]
the class of $\theta(S)$ modulo common conjugacy.

Since a diffeomorphism $\psi$ between two neighborhoods $S$ and $S'$ sends the $A$-canonical foliation $\F$ of $S$ to the $A$-canonical foliation $\F'$ of $S'$ (resp. the $B$-canonical foliations $\G,\G'$), $\psi$ then sends the bifoliated neighborhood $(S,\F,\G)$ to $(S',\F',\G')$.
Thus $\theta(S)$ and $\theta(S')$ are conjugated, ie. $\Phi([S])$ is well-defined.
Conversely, if $\Phi([S])=\Phi([S'])$, then $(S,\F,\G)$ is diffeomorphic to $(S',\F',\G')$ (and therefore $S$ is diffeomorphic to $S'$).
%

The realization part of the theorem is a direct consequence of theorem \ref{thm_classif_bif} (the relation \eqref{eq_representation} is trivial here).
\end{proof}

\begin{rmq}
About the realization of a tuple $((\varphi_{i}^{(j)})_i)_{j=1}^3$, remark that the conditions \eqref{eq_compatibility_1} and \eqref{eq_compatibility_2} only depend on the coefficients of $\varphi_i^{(j)}$ of order $k+1$.
In this sense, we can say that the image $\Phi(\ms{V}(C,k,\omega))$ is of finite codimension.
\end{rmq}

\section{Concluding remarks}

\subsection{About convergent foliations in $S$}

In some cases, the canonical foliations do not converge even if the neighborhood is analytic.
Indeed, if $C$ is an elliptic curve, Mishustin gave in \cite{mishustin_no_foliation} an example of a neighborhood $S$ of $C$ with trivial normal bundle and no analytic foliations tangent to $C$.

We can use this example to build examples in higher genus: let $p_1,p_2$ be two points on $C$ and $T_1,T_2$ two transversals at $p_1$ and $p_2$.
Consider the two-fold branched covering $\pi : S' \rightarrow S$ of $S$ branching at $T_1$ and $T_2$.
Denote $C' = \pi^{-1}(C)$, $\alpha,\beta$ the $A$- and $B$-loops on $C$ based at $p_1$, and $\alpha_1,\alpha_2,\beta_1,\beta_2$ the preimages of $\alpha$ and $\beta$.
They are the $A$- and $B$-loops on $C'$ based at $\pi^{-1}(p_1)$.
If $\F,\G$ are the canonical foliations on $S$, denote $\F'$ and $\G'$ the preimages of $\F,\G$ by $\pi$.

Then $S'$ is an analytic neighborhood of the genus $2$ curve $C'$, the canonical foliations of $S'$ are $\F'$ and $\G'$, and they do not converge.

Even with these examples, the question of the existence of an analytic neighborhood of a genus $2$ curve without any convergent foliation is still open.

\subsection{About analytic equivalence of neighborhoods}

Let $S,S'\in\ms{V}(C,k,\omega)$ be two analytic neighborhoods such that the canonical foliations converge.
Let $\theta=(\varphi_i),\theta'=(\varphi'_i)$, $i=1,\ldots,4g-3$ be the diffeomorphisms obtained in the construction, so that $\theta$ is a representative of $\Phi(S)$ and $\theta'$ is a representative of $\Phi(S')$.
Consider the groups $G$, $G'$ spanned by the $\varphi_i$ (resp. $\varphi'_i$).

Suppose $G$ is not abelian.
Then if $S$ and $S'$ are formally diffeomorphic, there is a formal diffeomorphism $\psi$ conjugating $\theta$ and $\theta'$.
This $\psi$ realizes a conjugacy between $G$ and $G'$ so by Cerveau-Moussu's rigidity theorem \cite{cm}, $\psi$ is convergent.
This in turn implies that $\theta$ and $\theta'$ are analytically conjugated, so that $S$ and $S'$ are analytically diffeomorphic.
Note that since the diffeomorphisms $\varphi_i$ are tangent to the identity, the group $G$ is abelian only if the $\varphi_i$ are flows of a same formal vector field \cite{loray_pseudogroupe}.

This argument also works for non-canonical foliations: suppose that $S$ and $S'$ are analytic neighborhoods conjugated by a formal diffeomorphism $\psi$.
Suppose that there is on $S$ two convergent foliations $\F$ and $\G$ with tangency index $k\geq 1$ and $1$-form $\omega$ with simple zeroes.
Suppose that $\psi$ sends $\F$ and $\G$ to convergent foliations $\F'$ and $\G'$.
Suppose finally that the group $G$ spanned by the diffeomorphisms composing the invariant $Inv(S,\F,\G)$ of theorem \ref{thm_classif_bif} is not abelian.
Then $\psi$ converges.

\subsection{About degenerate cases}

If the $1$-form $\omega$ doesn't have simple zeroes, we can still obtain a classification of neighborhoods in $\ms{V}(C,k,\omega)$ by the same method.
The problem is that in this case some non-trivial local invariants can arise.
For genus $g=2$ curves, the local situations which can be involved were classified in \cite{thom_boletim}.
These local classifications can then be used to obtain a classification of bifoliated neighborhoods of genus $2$ curves even in the degenerate cases (see \cite{thom_these}), which in turn could give a complete formal classification of neighborhoods of genus $2$ curves with trivial normal bundle.

\bibliography{mybib}{}
\bibliographystyle{acm}

\textsc{Univ Rennes, CNRS, IRMAR - UMR 6625, F-35000 Rennes, France}

\textit{Email:} olivier.thom@univ-rennes1.fr

\end{document}